\documentclass[preprint,12pt]{elsarticle}

\usepackage{symbols}
\usepackage{amsfonts}
\usepackage{graphicx}
\usepackage{amsmath}
\usepackage{amssymb}
\usepackage{amsthm}

\usepackage{subfig}	
\usepackage{tikz}	
\usepackage{amsbsy} 
\usepackage{bbold}
\usepackage{amsbsy,amsfonts} 
\usepackage{booktabs}
\usepackage[]{algorithm2e}
\usepackage{pgf,tikz,pgfplots}
\pgfplotsset{compat=newest} 
\usepackage{subfig}
\usetikzlibrary{arrows}
\usepackage{float,lscape}
\usepackage{url}

\definecolor{ffffff}{rgb}{0.9,0.9,0.9}

\newtheorem{theorem}{Theorem}
\newtheorem{proposition}[theorem]{Proposition}
\newtheorem{lemma}[theorem]{Lemma}
\newdefinition{remark}{Remark}
\newproof{pf}{Proof}
\newproof{pot}{Proof of Theorem \ref{thm2}}
\theoremstyle{definition} 
\newtheorem{definition}{Definition}

\newcommand{\R}{\mathbb{R}}
\newcommand{\oR}{\overline{\R}}
\newcommand{\oo}{o}
\newcommand{\OO}{{\cal O}}
\newcommand{\mv}{\,\big |\,}
\newcommand{\bmv}{\,\Big |\,}
\newcommand{\skalp}[1]{\langle #1\rangle}
\newcommand{\rge}{{\rm rge\;}}
\newcommand{\Sp}{{\cal S}}
\newcommand{\Z}{{\cal Z}}
\newcommand{\M}{{\cal M}}
\newcommand{\B}{{\cal B}}

\newcommand{\longsetto}[1]{\mathop{\longrightarrow}\limits^{#1}}
\newcommand{\SCD}{SCD\ }
\newcommand{\ssstar}{semismooth$^*$ }
\newcommand{\ZmP}{\Z_m^{P,W}}
\newcommand{\onabla}{\overline\nabla}
\newcommand{\xb}{\bar x}
\newcommand{\yb}{\bar y}
\newcommand{\ee}[2]{{#1}^{(#2)}}
\newcommand{\argmin}{\mathop{\rm arg\,min\,}}
\newcommand{\dist}[1]{{\rm dist}(#1)}
\newcommand{\norm}[1]{\|#1\|}
\newcommand{\inn}{{\rm int\,}}
\renewcommand{\tto}{\rightrightarrows}

\begin{document}

\begin{frontmatter}

\title{On the application of the SCD semismooth* Newton method to solving Stokes problem with stick-slip boundary conditions}

\author[1]{V.\ Arzt}
\ead{vladimir.arzt@vsb.cz}
  
\author[1]{P.\ Beremlijski}
\ead{petr.beremlijski@vsb.cz}
  
\author[2,3]{H.\ Gfrerer}
\ead{helmut.gfrerer@ricam.oeaw.ac.at}

\author[3]{J.V.\ Outrata}
\ead{outrata@utia.cas.cz}

\address[1]{Faculty of Electrical Engineering, VSB-Technical University of Ostrava,\\ 17. listopadu 15, 708\,33 Ostrava-Poruba, Czech Republic}

\address[2]{Johann Radon Institute for Computational and Applied Mathematics (RICAM), Austrian Academy of Sciences,\\
Altenbergerstr. 69, 4040 Linz, Austria}

\address[3]{Institute of Information Theory and Automation, Czech Academy of Sciences,\\ Pod vod\'{a}renskou v\v{e}\v{z}\'{\i}~4, 182\,08 Praha~8, 
Czech Republic}

\begin{abstract}
The paper deals with the 3D Stokes problem with Navier-Tresca stick-slip boundary conditions. A weak formulation of this problem leads to a variational inequality of the second kind, coupled with an equality constraint. This problem is then approximated using the mixed finite element method, yielding a generalized equation, to the numerical solution of which we implement a variant of the SCD semismooth* Newton method. This includes also a globalization technique ensuring convergence for arbitrary starting points. Numerical experiments demonstrate the effeciency of this approach.
\end{abstract}

\begin{keyword}
Stokes problem \sep stick-slip boundary condition \sep Navier-Tresca conditions \sep SCD mapping \sep SCD semismooth* Newton method 
\end{keyword}

\end{frontmatter}

\begin{keyword}
Stokes problem \sep stick-slip boundary condition \sep Navier-Tresca conditions \sep SCD mapping \sep SCD semismooth* Newton method 
\end{keyword}

\section{Introduction}

To model the flow of incompressible fluids, one often makes use of the Stokes equations, which are a simplification of the Navier-Stokes equations. It accurately characterizes low-speed, non-turbulent flows, i.e., flows with low Reynolds numbers.

The standard kinematic boundary condition for flow problems is represented by a no-slip condition. This condition states that the flow velocity at the~boundary created by a solid, impermeable wall is zero. However, in many real-world scenarios, a non-zero flow velocity can be observed at the~boundary (see~\cite{rothstein} and~\cite{fujita}). To address these situations, boundary conditions that allow for slip at the boundary have been introduced. At this boundary, there may be nodes where sticking occurs (i.e., they are in a stick state) and nodes where slipping occurs (i.e., they are in a slip state). These so-called stick-slip boundary conditions can be described either by a smooth function (the Navier boundary condition, see~\cite{navier}) or by a nonsmooth function (the~Tresca and Navier-Tresca boundary conditions, see~\cite{roux,roux-tani,haslinger-stebel-sassi,haslinger-makinen-stebel}). The~Tresca and Navier-Tresca conditions are more efficient in describing problems where slip has a threshold character. When the Stokes problem is formulated with the Tresca or Navier-Tresca condition in a weak formulation, it leads to an~inequality-type problem. It is known that a solution to the Stokes problem with either of these boundary conditions exists and is unique (see~\cite{haslinger-makinen-stebel}).

The Stokes problem with stick-slip boundary conditions is typically addressed by transforming the respective nonsmooth variational inequality into a dual problem. This dual problem is solved using methods such as the active set method, the path-following interior point method, or the semismooth Newton method. These approaches leverage Lagrange multipliers to enforce constraints including impermeability and incompressibility, and employ the~preconditioned conjugate gradient method for linear systems, often with specialized diagonal preconditioners (see~\cite{kucera-haslinger-satek-jarosova, haslinger-kucera-sassi-satek}). Each method ultimately solves a~system of equations derived from smoothing the original system. For numerical approximation, P1-bubble/P1 elements are frequently used (see~\cite{arnold-brezzi-fortin,kucera-arzt-koko}). Their advantage is a lower degree of freedom than, for example, P2-P1 elements. 

In this paper we employ to the numerical solution of the Stokes problem with the Navier-Tresca stick-slip boundary conditions a different approach, based on the recently developed SCD (Subspace Containing Derivative) semismooth* Newton method. In the sequel, this method will be denoted by the acronym SSSN. To achieve this goal, we will proceed in a similar way as employed in the papers~\cite{gfrerer-outrata-valdman} and~\cite{gfrerer-mandlmayr-outrata-valdman} in case of different variational problems. Note that, unlike the methods mentioned in the previous paragraph, this technique does not require any substantial reformulation/smoothing of the original nonsmooth problem. The SSSN method is a~powerful numerical tool for solving a class of generalized equations (systems with both single- and multi-valued parts), including various types of variational inequalities. It has been developed by Gfrerer and Outrata in \cite{GfrOut22} and it improves an earlier version of it (\cite{gfrerer-outrata}) by using the SCD property of the multi-valued part of the respective generalized equation (GE). Thanks to this modification, one can  construct a suitable approximation (linearization) of the considered GE in an efficient way. Similarly to the classical semismooth Newton methods, the SSSN method achieves locally superlinear convergence.  
To use the described method, it is necessary just to rewrite the~Stokes problem with stick-slip boundary conditions in form of a GE.
\if{
for which it is necessary to use generalized differential calculus by B. Mordukhovich (see~\cite{mordukhovich1,mordukhovich2}). This method has already been successfully used to solve the contact problem with Coulomb friction (see~\cite{gfrerer-mandlmayr-outrata-valdman}).
}\fi

The outline of the paper is as follows. Section 2 is devoted to the Stokes problem with the Navier-Tresca stick-slip boundary condition. In its first part, we focus on the continuous formulation of this problem. Next, we present a discrete formulation based on the mixed finite element method and, finally, optimality conditions are derived. In Section 3, we describe our main tool, the SSSN method.
\if{
We present this method for solving a generalization of the algebraic formulation of the Stokes problem presented in the previous section.
}\fi
Among other things, this requires to introduce the so-called SCD mappings and some notions from the respective theory. Thereafter, we present the two fundamental steps of the SSSN method: The~approximation step and the Newton step. Section 4 focuses on applying the~SSSN method to the considered problem and includes all implementation details. Section 5 is devoted to test examples showing the~efficiency of the~presented approach.

Our notation is standard:
Let $A$ be a set in $\dabR^s$, $\overline{x} \in A$ and $A$ be locally closed around $\overline{x}$. Then the tangent cone to $A$ at $\overline{x}$ is given by
$$T_A\left(\overline{x}\right) = \Limsup_{t \downarrow 0} \frac{A-\overline{x}}{t},$$
where "$\Limsup$" stands for the Painlev\'e-Kuratowski outer set limit (see~\cite{RoWe98}).
$||\cdot||$ denotes the Euclidean norm in 3D.

For two vectors $u,v\in\dabR^d$ we use the notation $\skalp{u,v} := u_1\cdot v_1 + \ldots + u_d\cdot v_d$. For two matrices $A,B\in\mathbb{R}^{d\times d}$ we denote by 
$A:B := \sum_{i,j=1}^d A_{ij}B_{ij}$ the~Frobenius inner product (double contraction).
Given  a real-valued convex function $f$, $\partial f(x)$ stands for the classical Moreau-Rockafellar subdifferential of $f$ at $x$. By $x \longsetto{A} \bar x$ we denote convergence within a set $A$. By $I_n$ we denote the unit matrix of the dimension $n$.

\section{Stokes problem}
\label{state_problem}
In this section, the classical formulation of the Stokes problem with the~Navier-Tresca stick-slip boundary condition is presented. Thereafter, the~weak formulation and the respective discretized problem are derived. Finally, optimality conditions are stated.

\subsection{Continuous and two-field weak formulation of the problem}
Consider a bounded domain \(\Omega \subset \dabR^3\) with a sufficiently smooth boundary \(\partial \Omega\). The boundary is divided into three non-empty disjoint parts such that $\partial\Omega=\overline{\gamma}_D\cup\overline{\gamma}_N\cup\overline{\gamma}_S$. On this domain, the steady flow of a viscous incompressible Newtonian fluid is modeled by the Stokes system with the Dirichlet and the Neumann boundary conditions on $\gamma_D$, $\gamma_N$, respectively, and with the~impermeability and nonlinear (Navier-Tresca) stick-slip boundary conditions on $\gamma_S$.
The classical formulation reads as follows:\\ Find a velocity field $\bu:\overline{\Omega}\rightarrow\dabR^3$ and a pressure field $p:\overline{\Omega}\rightarrow \dabR$ such that
\begin{equation} \label{eq1:1}
\begin{array}{rcll}
-2\nu\nabla\cdot\nabla_S\, \bu+\nabla p & = & \bff & \mbox{in }\Omega,\\
\nabla\cdot\bu & = & 0 & \mbox{in }\Omega,\\
\bu & = & \bu_D & \mbox{on }\gamma_D,\\
\bsigma & = & \bsigma_N & \mbox{on }\gamma_N,\\
u_n & = & 0 & \mbox{on }\gamma_S,\\
||\bsigma_t+\kappa\bu_t|| & \leq & g & \mbox{on }\gamma_S,\\
\bsigma_t\cdot\bu_t+g||\bu_t||+\kappa\bu_t\cdot\bu_t & = & 0 & \mbox{on }\gamma_S,
\end{array}
\end{equation}
where $\nu>0$ is the kinematic viscosity, $\nabla_S\, u=\frac12(\nabla u + (\nabla u)^\trans) \in \dabR^3 \times \dabR^3$ is the symmetric gradient, $\bff:\overline{\Omega}\rightarrow\dabR^3$ are volume forces acting on the fluid and $\bsigma:\partial\Omega\rightarrow\dabR^3$ is a stress defined as:
\begin{equation}
\bsigma=(2\nu\, \nabla_S\,\bu -pI)\bn\quad \mbox{on } \partial\Omega.
\label{eq1:8}
\end{equation}
The second equation in \eqref{eq1:1} guarantees the incompressibility condition.\linebreak{} The~mappings $\bu_D:\overline{\gamma}_D\rightarrow \dabR^3$ and $\bsigma_N:\overline{\gamma}_N\rightarrow\dabR^3$ specify the given Dirichlet and Neumann boundary data. The adhesive function is denoted by $\kappa:\gamma_S\rightarrow\dabR^{+}$, and the threshold slip bound function is represented by $g:\gamma_S\rightarrow\dabR^{+}$. The~unit outward normal vector, denoted by $\bn=\bn(\bx)\in\dabR^3$, along with two unit tangential vectors $\bt_1=\bt_1(\bx)\in\dabR^3$ and $\bt_2=\bt_2(\bx)\in\dabR^3$, are chosen such that the triplet $\{\bn,\bt_1,\bt_2\}$ forms an orthonormal basis at $\bx\in\partial\Omega$. 

The values of the normal and tangential velocity $\bu$ along $\partial\Omega$ are given by $u_n=\bu\cdot\bn$ and $\bu_t=\left(\bu\cdot\bt_1, \bu\cdot\bt_2\right)$, respectively. Analogously, $\bsigma_n$, $\bsigma_t$ are defined.

The classical solution of the problem \eqref{eq1:1} is represented by the functions $\bu\in\left(C^2(\Omega)\right)^3$ and $p\in C^1(\Omega)$, which possess a continuous extension to the~boundary $\partial\Omega$, such that all relations in \eqref{eq1:1} are satisfied after the~substitution of $\bu$ and $p$.

The weak formulation of the problem \eqref{eq1:1} requires the following forms:
\begin{alignat}{2}
&\label{form:1a}a_0:\left(H^1(\Omega)\right)^3 \times \left(H^1(\Omega)\right)^3\rightarrow \dabR,\quad &&a_{0}(\bw,\bv)=2\nu\displaystyle\int_\Omega \nabla_S\,\bw:\nabla_S\,\bv\, \mathrm{d} x,\\ \vspace{3mm}
&\label{form:1b}a_{\kappa}:\left(H^1(\Omega)\right)^3 \times \left(H^1(\Omega)\right)^3\rightarrow \dabR,\quad &&a_{\kappa}(\bw,\bv)=a_0(\bw,\bv)+\kappa\int_{\gamma_S}\bw_t\cdot\bv_t\, \mathrm{d} s,\\ \vspace{3mm}
&b:L^2(\Omega)\times\left(H^1(\Omega)\right)^3\rightarrow \dabR, && b(q,\bw)=-\displaystyle\int_\Omega q(\nabla \cdot \bw)\, \mathrm{d}x,\\ \label{form:2} \vspace{3mm}
&l:\left(H^1(\Omega)\right)^3 \rightarrow \dabR, &&l(\bv)=\displaystyle\int_\Omega \bff\cdot \bv + \int_{\gamma_N}\bsigma_N\cdot\bv\, \mathrm{d} s,\\ \label{form:3} \vspace{3mm}
&j:\left(H^1(\Omega)\right)^3 \rightarrow\dabR,\quad &&j(\bv)=\displaystyle\int_{\gamma_S} g||\bv_t||\,\mathrm{d} s.
\end{alignat}
Furthermore, we define the velocity solution set:
$$V_{\bu_D}=\left\{\bv\in\left(H^1(\Omega)\right)^3:\bv=\bu_D\ \mbox{ on }\  \gamma_D,\ v_n=0\ \mbox{ on }\ \gamma_S\right\}.$$
We denote by $V_{0}$ the space $V_{u_D}$ with $u_D = 0$ on $\gamma_D$.
The weak formulation of problem \eqref{eq1:1} attains the following form:
\begin{equation}
\left.
\begin{aligned}
&\text{Find}(\bu,p)\in V_{\bu_D}\times L^2(\Omega)\ \text{such that for all}\ (\bv,q)\in V_{0}\times L^2(\Omega)\!\ \\
&a_{\kappa}(\bu,\bv-\bu)+b(p,\bv-\bu)+j(\bv)-j(\bu)\geq l(\bv-\bu),\\
&b(q,\bu)=0.
\end{aligned}
\right\}
\label{eq1:2}
\end{equation}
It is thus a variational inequality of the second kind coupled with an equality constraint that guarantees the fulfillment of the second equation in~\eqref{eq1:1}. 

The existence and uniqueness of a weak solution follow from the next theorem (\cite{haslinger-kucera-sassi-satek}). The weak formulation described in the paper is equivalent to \eqref{eq1:2}.

\begin{theorem} Let $f\in(L^2(\Omega))^3,\ \sigma_N\in\left(L^2(\gamma_N)\right)^3$, and $g,\kappa\in L^\infty(\gamma_S)$, $g\geq0$, $\kappa\geq 0$. Then the solution $(u,p)$ to \eqref{eq1:2} exists, and the velocity component $u$ is unique. If $\gamma_N\neq \emptyset$, then the pressure component $p$ is unique as well, and it is defined up to an additive constant.
\end{theorem}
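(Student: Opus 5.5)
We reduce the problem to a minimization over divergence-free velocities, then recover the pressure through the inf-sup condition for $b$.

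\textbf{Velocity.} First homogenize the Dirichlet data. Choose a lifting $\bu^0\in V_{\bu_D}$ with $\nabla\cdot\bu^0=0$; this is possible under the usual compatibility assumptions on $\bu_D$ and using the freedom on $\gamma_N$. Write $\bu=\bu^0+\bw$ and set
$$K=\{\bv\in V_0:\ b(q,\bv)=0\ \forall q\in L^2(\Omega)\}=\{\bv\in V_0:\nabla\cdot\bv=0\}.$$
Testing \eqref{eq1:2} only with $\bv$ such that $\bv-\bu\in K$ shows that $\bw\in K$ solves the reduced variational inequality of the second kind
$$a_\kappa(\bu^0+\bw,\bz-\bw)+j(\bu^0+\bz)-j(\bu^0+\bw)\ge l(\bz-\bw)\quad\forall \bz\in K.$$
Here the $b$-term drops out. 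This is the optimality condition of the convex functional
$$J(\bz)=\tfrac12 a_\kappa(\bz,\bz)+a_\kappa(\bu^0,\bz)+j(\bu^0+\bz)-l(\bz)\quad\text{on } K.$$
Since $\gamma_D$ has positive measure, Korn's inequality gives $a_0(\bz,\bz)\ge c\|\bz\|_{H^1}^2$ on $V_0$, and $\kappa\ge0$ only adds a nonnegative term. The functional $j$ is convex, Lipschitz continuous (by the trace theorem and $g\in L^\infty$) and nonnegative. Hence $J$ is strictly convex, continuous and coercive on the closed subspace $K$, so it has a unique minimizer $\bw$. This gives existence of the velocity, and strict convexity gives its uniqueness. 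Uniqueness can also be seen directly: if $\bu_1,\bu_2$ are two solutions, test each inequality with the other, add, and obtain $a_\kappa(\bu_1-\bu_2,\bu_1-\bu_2)\le0$.

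\textbf{Pressure (main obstacle).} The step I expect to be hardest is recovering a pressure $p$ such that the full inequality holds for all $\bv\in V_0$, not only for divergence-free directions. My approach is as follows.

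Convex duality gives a multiplier $\lambda\in \partial j(\bu)$, that is, a functional on traces with $|\lambda|\le g$, such that the linear functional
$$F(\bv)=l(\bv)-a_\kappa(\bu,\bv)-\lambda(\bv),\qquad \bv\in V_0,$$
vanishes on $K$. To obtain $\lambda$, apply the subdifferential sum rule to $J$ restricted to $K$; continuity of $j$ makes the rule valid.

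Next use the inf-sup condition for $b$ on $V_0\times L^2(\Omega)$. By the Ne\v{c}as/Bogovskii argument, the divergence operator maps $V_0$ onto $L^2(\Omega)$ when $\gamma_N\ne\emptyset$, and onto $L^2_0(\Omega)$ otherwise. By the closed range theorem there is a $p$ with $b(p,\bv)=F(\bv)$ for all $\bv\in V_0$.

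Substituting this $p$, the first relation of \eqref{eq1:2} follows from $\lambda(\bv-\bu)\le j(\bv)-j(\bu)$, which is the definition of $\lambda\in\partial j(\bu)$. The second relation holds because $\bu\in\bu^0+K$.

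\textbf{Nonuniqueness of $p$.} If $\gamma_N=\emptyset$, the range of the divergence is $L^2_0(\Omega)$, so $b(c,\cdot)\equiv0$ on $V_0$ for constants $c$, and $p$ is determined only up to an additive constant. Within that class it is unique, because the divergence is surjective onto $L^2_0(\Omega)$.

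When $\gamma_N\neq\emptyset$ the map is onto $L^2(\Omega)$. Uniqueness of $p$ then needs the multiplier $\lambda$ to be unique. This is immediate where $\bu_t\neq0$, since there $\lambda=g\,\bu_t/\|\bu_t\|$. On the stick region, however, $\lambda$ is not determined by $\partial j(\bu)$ alone. I would fix it from the equation by testing with $\bv$ that have tangential traces supported in the stick set, so that $\lambda=F$ restricted to such $\bv$, which pins down $\lambda$ given $\bu$ and $p$. Closing this circular dependence is the delicate point.

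Where this fails, I would rely on the cited result \cite{haslinger-kucera-sassi-satek}. I would also read the statement's phrase ``defined up to an additive constant'' as referring to the case $\gamma_N=\emptyset$.
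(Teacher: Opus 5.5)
The paper gives no argument for this theorem; it quotes it from \cite{haslinger-kucera-sassi-satek}. Your velocity part is correct: the reduction to the divergence-free subspace $K$, Korn's inequality and strict convexity of $J$. Your construction of a pressure is also correct: a multiplier $\lambda\in\partial j(u)$ from the optimality condition on $K$, followed by the closed range theorem for the divergence on $V_0$. The genuine gap is uniqueness of $p$ when $\gamma_N\neq\emptyset$, which you leave open and propose to take from the citation. Your diagnosis of the difficulty is also off. You do not need to pin down $\lambda$ on the stick set beforehand, so there is no circular dependence to close. Uniqueness of the boundary multiplier is a consequence of pressure uniqueness, not a prerequisite for it.

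The missing step is short. By uniqueness of the velocity, any two solutions have the form $(u,p_1)$ and $(u,p_2)$. Take $w\in V_0$ with $w=0$ on $\gamma_S$, and test with $v=u\pm w$. Since $j$ depends only on the tangential trace on $\gamma_S$, we have $j(u\pm w)=j(u)$. The two inequalities therefore collapse to the equation $a_\kappa(u,w)+b(p_i,w)=l(w)$ for $i=1,2$. Subtracting gives $b(p_1-p_2,w)=0$ for all $w\in H^1(\Omega)^3$ vanishing on $\gamma_D\cup\gamma_S$. Because $\gamma_N$ has positive measure, the divergence maps this smaller space onto all of $L^2(\Omega)$: use Bogovskii's operator for the mean-free part, plus one fixed field supported near $\gamma_N$ with nonzero flux. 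Hence $p_1=p_2$. Your case $\gamma_N=\emptyset$ has the same hole. Surjectivity onto $L^2_0(\Omega)$ only shows that $p$ is unique modulo constants for a \emph{fixed} $\lambda$. Testing instead with $w\in H^1_0(\Omega)^3$ as above gives $p_1-p_2\perp L^2_0(\Omega)$, so $p_1-p_2$ is constant, with no reference to $\lambda$.
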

\subsection{Mixed finite element method}
\label{discretization}

We approximate problem \eqref{eq1:2} using the mixed finite element method~\cite{Boffi-Brezzi-Fortin-2013} with the P1-bubble/P1 finite element pair (also called MINI) introduced by Arnold, Brezzi, and Fortin~\cite{arnold-brezzi-fortin}. This pair is known for satisfying the $\inf$-$\sup$ stability Babu\v{s}ka-Brezzi condition~\cite{Girault-Raviart-1986,Quarteroni-Valli-1994}, good approximation property, and a small degree of freedom. Matrices and vectors will be assembled using vectorized codes proposed by J. Koko in \cite{koko} and their improved versions from~\cite{kucera-arzt-koko}.

Let $\T^h$ be a regular partition of $\overline{\Omega}$ and consider its segment $T\in\T^h$. By $h>0$, we denote the largest diameter of $T\in\T^h$. $N(\Omega) =\{\bx_1, \dots, \bx_{n_p}\}$ is the~set of all nodes of $\T^h$ belonging to $\overline{\Omega}$, with the cardinality (size) $n_p=|N(\Omega)|$. Let us introduce integers $n_d=|N(\gamma_D)|$, $n_s=|N(\gamma_S)|$, where $N(\gamma_D)=\{\bx_i \in N(\Omega): \bx_i \in \overline{\gamma}_D\}$, and $N(\gamma_S) = \{\bx_i \in N(\Omega): \bx_i \in \overline{\gamma}_S \setminus \overline{\gamma}_D\}$, respectively. Finally, $n_u=|N(\overline{\Omega} \setminus \overline{\gamma}_D)|$ , and $n_t$ is the number of all $T \in \T^h$. 

The piecewise linear Courant basis functions $\phi_i: \overline{\Omega} \rightarrow \dabR$, are associated with each vertex of $T\in\T^h$ such that $\phi_i(\bx_j) = \delta_{ij}$, $1 \leq i,j \leq n_p$. These functions generate the space
$$W^h = \left\{v^h \in C(\overline{\Omega}): v^h_{|_T} \in P^1(T) \quad \forall T \in \T^h \right\},\quad \dim W^h = n_p.$$

The bubble functions $\phi_b^T: \overline{\Omega} \rightarrow \dabR$, which are non-zero only in one \(T \in \T_h\), are defined as follows:
$$\phi_b^T = 4^4 \prod_{j=1}^{4} \phi_{i(j)},$$
where $\{\bx_{i(1)}, \bx_{i(2)}, \bx_{i(3)}, \bx_{i(4)}\}$ are the vertices of $T$. These functions generate the space
$$B^h = \left\{ v^h \in C(\overline{\Omega}):\ v^h_{|T} = c_T \phi_b^T,\ c_T \in \dabR \quad \forall T \in \T^h \right\},\quad \dim B^h = n_t.$$
With the above-introduced two spaces, we associate the following spaces:
$$
\begin{aligned}
V^h  &= \left(W^h\oplus B^h\right)^3,\\
V^h_{\bu_D}  &= \left\{\bv^h \in V^h: \bv^h(\bx_i) = \bu_D(\bx_i) \quad \forall \bx_i \in N(\gamma_D),\ v^h_n(\bx_i) = 0 \quad \forall \bx_i \in N(\gamma_S)\right\},\\
V^h_{0} &= V^h_{\bu_D} \ \text{with}\ \bu_D=0,\quad \dim V^h_{\bu_D} = \dim V^h_{0} = 3(n_u + n_t).
\end{aligned}
$$
The mixed finite element approximation of the problem \eqref{eq1:2} reads now as follows:
\begin{equation}
\left.
\begin{aligned}
&\text{Find }(\bu^h,p^h)\in V^h_{\bu_D}\times W^h\ \text{such that for all}\ (\bv^h,q^h)\in V^h_{0}\times W^h\!\ \\
&a_{\kappa}(\bu^h,\bv^h-\bu^h)+b(p^h,\bv^h-\bu^h)+j(\bv^h)-j(\bu^h)\geq l(\bv^h-\bu^h),\\
&b(q^h,\bu^h)=0.
\end{aligned}
\right\}
\label{eq1:3}
\end{equation}
\subsection{Algebraic formulation} The vectorized assembly functions of the finite element matrices and vectors representing the forms $a_{\kappa}$, $b$, and $l$ are described in \cite{kucera-arzt-koko,KAK-2024}. Let us note that during matrix construction, bubble components at the element level are effectively eliminated. 

We introduce first the algebraic vectors
$$
\bu = \begin{pmatrix} \bu^h(\bx_1) \\ \vdots \\ \bu^h(\bx_{n_u}) \end{pmatrix}\in\dabR^{3n_u} \text{, and} \quad \bp = \begin{pmatrix} p^h(\bx_1) \\ \vdots \\ p^h(\bx_{n_p}) \end{pmatrix}\in\dabR^{n_p}
$$
of the nodal velocities and pressure components, and the index set~\linebreak{} $\N:=\{1,2,\ldots,n_s\}$, which is the index set of the nodes lying on $\overline{\gamma}_S\setminus \overline{\gamma}_D$. {The index set of the remaining nodes is denoted by $\I:=\{n_s+1,\ldots,n_u\}$.}

Problem \eqref{eq1:3} can be written as searching a solution $(\bu,\bp)\in\dabR^{3n_u}\times\dabR^{n_p}$ of the generalized equation (GE)
\begin{equation}\label{EqGE0}
0\in F(\bu,\bp) = \left(\begin{array}{cc}A_{\kappa}& B^\trans \\ B & -E\end{array}\right) \left(\begin{array}{c} \bu\\  \bp \end{array}\right) - \left(\begin{array}{c} b \\ c\end{array}\right) + \partial q(\bu,\bp), 
\end{equation}

with $$q(\bu,\bp)=\sum_{i\in\N} \left(g_i||T^i\bu^i||+\delta_{S^i}(\bu^i)\right),\quad S^i:=\left\{\bv\in\dabR^3 :N^i\bv=0\right\},$$
where the indicator function $\delta_S$ is given by
$$\delta_S(\bx)=\left\{\begin{aligned}
0 \quad \text{if } \bx\in S,\cr
\infty\quad \text{if } \bx\notin S,
\end{aligned}\right.$$ and ensures thus the impermeability condition.

The matrix $A_{\kappa}\in\dabR^{3n_u\times3n_u}$ is nonsingular, symmetric and positive definite and $B\in\dabR^{n_p\times 3n_u}$ has the full row-rank. Further, the matrix $E\in\dabR^{n_p\times n_p}$ and the vector $c\in\dabR^{n_p}$ are products of the bubble compoments elimination. Moreover, $E$ is symmetric {and positive semidefinite}.

The matrices $T^i$ and $N^i$ are $2\times 3$ and $1\times 3$ matrices, respectively, containing the two tangents and the normal to the boundary at the i-th node. Further, $g_i=\mu(\bx_i)g(\bx_i)$, where $\mu(\bx_i)$ is the measure of the element. Note that the $3\times 3 $ matrix
$$Q^i:=\left(\begin{array}{c} T^i\\ N^i\end{array}\right)$$
must be orthogonal. Note that due to~\cite{haslinger-makinen-stebel} the generalized equation GE~\eqref{EqGE0} has a unique solution.

\section{SCD semismooth Newton* method}
After substituting $\bp=-\hat \bp$, problem \eqref{EqGE0} can be written as finding a~solution $(\bu,\hat\bp)$ of the GE
\begin{equation}\label{EqGE}
0\in F(\bu,\hat\bp) = \left(\begin{array}{cc}A_{\kappa}&-B^\trans \\ B & E\end{array}\right) \left(\begin{array}{c} \bu\\  \hat\bp \end{array}\right) - \left(\begin{array}{c} b \\ c\end{array}\right) + \partial q(\bu,\hat\bp).
\end{equation}
By setting $x:=\left(\begin{smallmatrix}u\\\hat p\end{smallmatrix}\right)\in\R^m$, $m:= n_u+n_p$ and
\[H(x):=\begin{pmatrix}A_{\kappa}&-B^\trans \\ B & E\end{pmatrix}\begin{pmatrix} u\\  \hat p \end{pmatrix} - \begin{pmatrix} b \\ c\end{pmatrix},\]
the GE \eqref{EqGE} attains the form
\begin{equation}\label{EqGE1}0\in F(x) = H(x)+\partial q(x)\end{equation}
with an affine linear operator $H:\R^m\to\R^m$ and a proper convex lower semicontinuous (lsc) function $q:\R^m\to\oR:=\R\cup\{\infty\}$. Since for all $x_i=(u_i,\hat p_i)$, $i=1,2$, we have
\[\skalp{H(x_1)-H(x_2),x_1-x_2}=\skalp{A_\kappa(u_1-u_2),u_1-u_2}+\skalp{E(\hat p_1-\hat p_2),\hat p_1-\hat p_2}\geq 0,\]
mapping $H$ is maximally monotone.

In order to solve  problems of this type one may apply a globally convergent algorithm based on the SCD \ssstar Newton method as presented in the recent paper by Gfrerer, Outrata and Valdman \cite{gfrerer-outrata-valdman}. Let us briefly describe this method, which is based on the SC (subspace containing) derivative introduced in \cite{GfrOut22}. In what follows, we will extensively work with $m$-dimensional subspaces of $\R^m\times\R^m$. Note that for two $m\times m$ matrices $A,B$ the subspace
\[\rge(A,B):=\{(Ap,Bp)\mv p\in\R^m\}\subset \R^m\times\R^m\]
has dimension $m$ if and only if the $2m\times m$ matrix $\left(\begin{smallmatrix}A\\B\end{smallmatrix}\right)$ has full column rank $m$.

If a single-valued mapping $G:\R^m\to\R^m$ is Fr\'echet differentiable at a~point $\xb$, the tangent cone to the graph of $G$ at $(\xb,G(\xb))$ is the $m$-dimensional subspace $\rge(I,\nabla G(\xb))$. This observation is the motivation for the following definition.
\begin{definition}
The set-valued mapping $G:\R^m\tto\R^m$ is called \emph{graphically smooth of dimension $m$} at $(x,y)\in\gph G$, if the tangent cone $T_{\gph G}(x,y)$ is an $m$-dimensional subspace of $\R^m\times \R^m$. Furthermore, $\OO_G$ denotes the set of all points $(x,y)\in\gph G$ such that $G$ is graphically smooth of dimension $m$ at $(x,y)$.
\end{definition}

Next, we consider the metric space $\Z_m$ of all $m$-dimensional subspaces of $\R^m\times\R^m$, equipped with the metric
    \begin{equation*}
        d_{\Z}(L_1,L_2):=\norm{P_1-P_2} \,,
    \end{equation*}
where $P_i$, $i=1,2$ denote the orthogonal projections onto the subspaces $L_i$. 

Given a subspace $L\in \Z_{m}$, we define its adjoint subspace $L^*\in\Z_{m}$ by
\[L^*=\{(v^*,u^*)\mv (u^*,-v^*)\in L^\perp\}.\]
Note that $(L^*)^*=L$ and $d_\Z(L_1,L_2)=d_\Z(L_1^*,L_2^*)$.

Next consider the definition of \emph{subspace containing (SC) derivatives}.
\begin{definition}\label{DefSCDProperty} Let $G:\R^m\tto\R^m$ be a set-valued mapping. Then:
  \begin{enumerate}
    \item The \emph{subspace containing (SC) derivative} $\Sp G:\gph G\tto \Z_m$ of $G$ is defined by
    \begin{align*}
        \Sp G(x,y):=\{L\in \Z_m\mv \exists \, (x_k,y_k)\longsetto{{\OO_G}}(x,y):\,&\\ \lim_{k\to\infty} d_\Z\big(L,T_{\gph G}(x_k,y_k)\big)&=0 \}.
    \end{align*}
    \item The \emph{adjoint SC derivative} $\Sp^* G:\gph G\tto \Z_m$ of $G$ is given by
    \[\Sp^*G(x,y) = \{L^*\mv L\in\Sp G(x,y)\}.\]
    \item The mapping $G$ is said to have the \emph{subspace containing derivative (SCD) property at} $(x,y)\in\gph G$, if there holds $\Sp G(x,y)\neq \emptyset$. Furthermore, we say that $G$ has the \SCD property \emph{around} $(x,y)\in\gph G$, if there is a neighborhood $W$ of $(x,y)$ such that $G$ has the \SCD property at every $(x',y')\in\gph G\cap W$. Finally, we call $G$ an \emph{\SCD mapping}, if $G$ has the \SCD property at every point of its graph.
  \end{enumerate}
\end{definition}

The SC derivative may be considered as an extension of the so-called {\em B-Jacobian (B-differential)} from single-valued to set-valued mappings. Given a mapping $H:D\to\R^m$, $D\subset\R^m$ open, let $\OO_H$ denote the set of all points belonging to $D$ where $H$ is \F differentiable. Then the B-Jacobian at a~point $x\in D$ is given by
\[\onabla H(x):=\{A\in \R^{m\times m}\mv \exists x_k\longsetto{\OO_H}x:\ A=\lim_{k\to\infty}\nabla H(x_k)\}.\]
It follows from \cite[Lemma 3.11]{GfrOut22} that
    \begin{equation*}
        \Sp H (x,H(x))\supset \{\rge(I,A)\mv A\in \onabla H(x)\} \,, \quad  \forall \, x\in D \,,
    \end{equation*}
and equality holds for all $x$ such that $H$ is Lipschitz continuous near $x$.

The set-valued mappings we are dealing with in this paper, are subdifferentials of convex lsc functions and for such mappings the SC derivative has a special structure. Further, the subspaces contained in the SC derivative offer a certain particularly useful basis representation.
\begin{definition}\label{DefZnP}
  We denote by $\ZmP$ the collection of all subspaces $L\in\Z_m$ such that there are symmetric $m\times m$ matrices $P$ and $W$ with the following properties:
  \begin{enumerate}
  \item[(i)] $L=\rge(P,W)$,
  \item[(ii)] $P^2=P$, i.e., $P$ represents the orthogonal projection onto some subspace of $\R^m$,
  \item[iii)] $W(I-P)=I-P$.
  \end{enumerate}
\end{definition}
Note that for symmetric matrices $W,P$ fulfilling conditions (ii) and (iii) of the above definition  we have
\begin{gather}\label{EqPW1}(I-P)W=\big(W(I-P)\big)^T=(I-P)^T=I-P=W(I-P),\\
\label{EqPW2}PW=WP=PWP,\\
\label{EqPW3}W=WP+W(I-P)=PWP+(I-P).
\end{gather}
By \cite[Lemma 3.3]{Gfr25a}, subspaces $L\in\ZmP$ are self adjoint, i.e., $L=L^*$. Since lsc convex functions are both prox-regular and subdifferentially continuous (\cite[Chapter 8]{RoWe98}), we may combine \cite[Corollary 3.2]{GfrOut22} and \cite[Proposition 3.6]{Gfr25a} to obtain the following statement.
\begin{proposition}
  For every lsc proper convex function $q:R^m\to\oR$ the subdifferential mapping $\partial q$ is an \SCD mapping and for every $(x,x^*)\in\gph\partial q$ there holds $\Sp(\partial q)(x,x^*)= \Sp^*(\partial q)(x,x^*)\subset\ZmP$.
\end{proposition}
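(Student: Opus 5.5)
The statement is a combination of two cited results, and the plan is to verify that their hypotheses hold and then chain them. \emph{First}, recall that a proper lsc convex function $q$ is prox-regular and subdifferentially continuous at every $\xb\in\operatorname{dom} q$ for every $\bar x^*\in\partial q(\xb)$ (\cite[Chapter 8]{RoWe98}). Prox-regularity follows because the convex subgradient inequality $q(x')\ge q(x)+\skalp{x^*,x'-x}$ holds globally, so the prox-regularity inequality holds with constant $r=0$. Subdifferential continuity follows from lower semicontinuity combined with the same subgradient inequality. Indeed, take $(x_k,x_k^*)\to(x,x^*)$ in $\gph\partial q$. Applying the inequality at $x_k$ with $x'=x$ gives $\limsup q(x_k)\le q(x)$, and lower semicontinuity gives the reverse inequality.

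\emph{Second}, invoke \cite[Corollary 3.2]{GfrOut22}. For mappings $\partial q$ with $q$ prox-regular and subdifferentially continuous, it states that $\partial q$ has the SCD property at every point of its graph, and that $\Sp(\partial q)(x,x^*)=\Sp^*(\partial q)(x,x^*)$. Since by the first step these hypotheses hold at every $(x,x^*)\in\gph\partial q$, we get both that $\partial q$ is an \SCD mapping and the equality of the SC derivative with its adjoint.

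\emph{Third}, invoke \cite[Proposition 3.6]{Gfr25a}, which under the same hypotheses gives $\Sp(\partial q)(x,x^*)\subset\ZmP$. The underlying reason can be sketched if needed. At points of $\OO_{\partial q}$, the tangent space is the graph of the derivative of the proximal mapping transformed back. Write $\operatorname{prox}_q=(I+\partial q)^{-1}$, which is firmly nonexpansive. Then its Jacobians $J$ are symmetric with $0\preceq J\preceq I$, and $T_{\gph\partial q}=\rge(J,I-J)$. A change of basis rewrites this as $\rge(P,W)$ with $P$ the projection onto $\rge J$ and $W=PWP+(I-P)$. The conditions (ii), (iii) of Definition~\ref{DefZnP} are closed under limits in $\Z_m$ after a suitable normalization, so they pass to limits.

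The main obstacle, if one were to prove the inclusion in $\ZmP$ directly rather than cite it, is this closedness of $\ZmP$ under limits in the metric $d_\Z$. The representing matrices $W$ may be unbounded along a sequence, since the eigenvalues of $I-J$ divided by those of $J$ can blow up. I would handle this by working instead with the bounded pair $(J,I-J)$, passing to a convergent subsequence of the $J_k$, and only then constructing $P$ and $W$ for the limit. For the proposition as stated, however, citing the two results suffices.
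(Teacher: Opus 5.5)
Your proposal is correct and follows the paper's route exactly. The paper simply notes that proper lsc convex functions are prox-regular and subdifferentially continuous, and then combines \cite[Corollary 3.2]{GfrOut22} with \cite[Proposition 3.6]{Gfr25a}; your direct check of these two hypotheses and your sketch via the Jacobians $J$ of ${\rm prox}_q$ (taking limits in the bounded pair $(J,I-J)$ before forming $P$ and $W$) are sound additions that the paper does not spell out.
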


  In what follows, we denote for a convex lsc function $q:\R^m\to\oR$ and a pair $(x,x^*)\in\gph \partial q$ by $\M_{P,W}\partial q(x,x^*)$ the collection of all symmetric $m\times m$ matrices $P$ and $W$ fulfilling conditions (ii) and (iii) of Definition \ref{DefZnP} and such that $\rge(P,W)\in \Sp(\partial q)(x,x^*)$.  By convexity of $q$,  $W$ is positive semidefinite for every $(P,W)\in\M_{P,W}\partial q(x,x^*)$. Further note that for twice  differentiable functions $q$ we have $\M_{P,W}\partial q(x,\nabla q(x))=\{(I,\nabla^2 q(x))\}$.

  Finally we need the definition of SCD semismoothness.
  \begin{definition}[{\cite[Definition 5.1]{GfrOut22}}]\label{DefSCDssstar}
We say that $G:\R^m\tto\R^m$ is {\em\SCD \ssstar at} $(\xb,\yb)\in\gph G$ if $G$ has the \SCD property around $(\xb,\yb)$ and
for every $\epsilon>0$ there is some $\delta>0$ such that
\begin{align}\label{EqDefSSCSemiSmooth}
\vert \skalp{x^*,x-\xb}-\skalp{y^*,y-\yb}\vert&\leq \epsilon
\norm{(x,y)-(\xb,\yb)}\norm{(x^*,y^*)}
\end{align}
holds for all $(x,y)\in \gph G\cap \B_\delta(\xb,\yb)$ and all $(y^*,x^*)\in L^*$, $L^*\in\Sp^*G(x,y)$.
\end{definition}

It can be easily shown that $G$ is \SCD \ssstar at $(\xb,\yb)$ if and only if
\begin{equation}\label{EqSCDssstar1}\lim_{(x,y)\longsetto{\gph G}(\xb,\yb)}\frac{\sup\{\dist{(\xb-x,\yb-y),L}\mv L\in \Sp G(x,y)\}}{\norm{(\xb-x,\yb-y)}}=0,\end{equation}
cf. \cite[Proposition 2.12]{Gfr25a}.

Many mappings appearing in applications are SCD \ssstar\hspace{-2mm}. For instance, if $\gph G$ is a closed subanalytic set then it follows from a result of Jourani \cite{Jou07}, that $G$ is SCD \ssstar at every point belonging to its graph.

  Let us now briefly describe  the SCD \ssstar Newton method for solving problem \eqref{EqGE1}, where we formulate the method for continuously differentiable mappings $H$. Let $\xb$ denote a solution to \eqref{EqGE1} and assume that $\partial q$~is~SCD \ssstar at $(\xb,-H(\xb))\in \gph\partial q$. Consider an iterate $\ee xk$ close to $\xb$. In a first step, the so-called {\bf approximation step}, we compute a pair $(\ee zk,\ee{z^*}k)\in\gph \partial q$ by
  \[\ee zk :={\rm prox}_{\lambda q}(\ee xk-\lambda H(\ee xk)),\quad \ee {z^*}k:=\frac{\ee xk-\ee zk}\lambda-H(\ee xk),\]
  where $\lambda>0$ is a fixed parameter and
  \[{\rm prox}_\varphi(x):=\arg \min_z\left\{\frac 12\norm{z-x}^2+\varphi(z)\right\}\]
  denotes the {\em proximal mapping} of an lsc function $\varphi:\R^m\to\oR$. Since $q$ is convex lsc, $\partial (\lambda q)$ is maximally monotone and thus both ${\rm prox}_{\lambda q}$ and\linebreak{} $Q:=Id-{\rm prox}_{\lambda q}$ are  single-valued and nonexpansive by Minty's theorem, cf.
  \cite[Theorem 12.15]{RoWe98}. Note that the computation of $\ee zk$ amounts exactly to one step of the well-known {\em forward-backward splitting method}, see, e.g., \cite{BauCom17}. Together with the identities $\xb={\rm prox}_{\lambda q}(\xb-\lambda H(\xb))$,\linebreak{} $H(\xb)=-Q(\xb-\lambda H(\xb))/\lambda$ and $\ee{z^*}k=Q(\ee xk-\lambda H(\ee xk))/\lambda$ and
   we obtain the bounds
  \[\norm{\ee zk-\xb}\leq\norm{\ee xk-\lambda H(\ee xk)-(\xb-\lambda H(\xb))}\leq (1+\lambda l)\norm{\ee xk-\xb}\]
  and
  \begin{align*}\norm{\ee {z^*}k+H(\xb)}&=\norm{Q(\ee xk-\lambda H(\ee xk))-Q(\xb-\lambda H(\xb))}/\lambda\\
  &\leq\norm{\ee xk-\lambda H(\ee xk)-(\xb-\lambda H(\xb))}/\lambda  \leq (\frac 1\lambda+ l)\norm{\ee xk-\xb},\end{align*}
  where $l$ denotes the Lipschitz constant of $H$ in some ball around $\xb$ containing the point $\ee xk$.
  Next we choose an arbitrary pair 
  \[(\ee Pk,\ee Wk)\in \M_{P,W}\partial q(\ee zk,\ee {z^*}k).\] 
  Then, by taking into account \eqref{EqSCDssstar1}, there is some $\ee pk\in\R^m$ such that
  \begin{align}\label{EqErrSemism}\lefteqn{\norm{(\xb-\ee zk-\ee Pk\ee pk, -H(\xb)-\ee{z^*}k-\ee Wk\ee pk)}}\\
   \nonumber&=\dist{(\xb-\ee zk,-H(\xb)-\ee {z^*}k) ,\rge(\ee Pk,\ee Wk)}\\\nonumber&=\oo(\norm{(\ee zk-\xb,\ee{z^*}k+H(\xb))})
   =\oo(\norm{\ee xk-\xb}).\end{align}
  By differentiability of $H$  we have
  \[H(\ee xk)-H(\xb)+\nabla H(\ee xk)(\xb-\ee xk)=\oo(\norm{\ee xk-\xb})\]
  and, since we can derive from \eqref{EqErrSemism} the relationships $\xb -\ee xk =\ee zk - \ee xk +\ee Pk\ee pk+\oo(\norm{\ee xk-\xb})$ and
  $-H(\xb)=\ee{z^*}k+\ee Wk\ee pk+\oo(\norm{\ee xk-\xb})$, the~equation
  \[H(\ee xk)+\ee{z^*}k+\ee Wk\ee pk+ \nabla H(\ee xk)(\ee zk-\ee xk +\ee Pk\ee pk)=\oo(\norm{\ee xk-\xb})\]
  follows. By multiplying with $\ee Pk$ and by using \eqref{EqPW2} and the fact that $\ee Pk$ represents an orthogonal projection, we obtain that
  \begin{align*}\lefteqn{\big(\ee Pk\nabla H(\ee xk)\ee Pk+\ee Wk\big)\ee Pk\ee pk}\\
   &= -\ee Pk\big(H(\ee xk)+\nabla H(\ee xk)(\ee zk -\ee xk)+\ee{z^*}k\big)+\oo(\norm{\ee xk-\xb})\\
   &= -\ee Pk\big(\nabla H(\ee xk)-I/\lambda\big)(\ee zk -\ee xk)+\oo(\norm{\ee xk-\xb}).
   \end{align*}
   Thus, if the matrix $\ee Pk\nabla H(\ee xk)\ee Pk+\ee Wk$ is invertible and the norm of its inverse is uniformly bounded for $\ee xk$ close to $\xb$, it follows that the solution $\ee {\triangle z}k$ of the system
   \begin{equation}\label{EqNewtonDirZ}\big(\ee Pk\nabla H(\ee xk)\ee Pk+\ee Wk\big)\triangle z=-\ee Pk\big(\nabla H(\ee xk)-I/\lambda\big)(\ee zk -\ee xk)\end{equation}
   satisfies
   \[\triangle \ee zk = \ee Pk\ee pk+\oo(\norm{\ee xk-\xb}) =\xb-\ee zk+ \oo(\norm{\ee xk-\xb}).\]
   By setting $\ee x{k+1}:= \ee zk+\triangle\ee zk$, we obtain finally that
   \[\norm{\ee x{k+1}-\xb}=\oo(\norm{\ee xk-\xb}).\]
   \begin{remark}
     By using  the properties of $\ee Pk$ and $\ee Wk$, it is easy to see that the difference $\ee {\triangle x}k:=\ee x{k+1}-\ee xk = \ee zk -\ee xk +\ee {\triangle z}k$ is a solution of the~linear system
     \begin{equation}\label{EqNewtonDirX}\big(\ee Pk\nabla H(\ee xk)+\ee Wk\big)\triangle x =(\ee Wk +\ee Pk/\lambda)(\ee zk-\ee xk).\end{equation}
     This formula corresponds to the computation of the Newton direction in \cite{gfrerer-outrata-valdman}, where also other basis representations of subspaces $\ee Lk\in\Sp(\partial q)(\ee zk,\ee{z^*}k)$ are considered.
   \end{remark}
   The following theorem is implied by \cite[Theorem 5.2]{gfrerer-outrata-valdman}:
   \begin{theorem}\label{ThConv}
     Assume that $\xb\in F^{-1}(0)$ is a solution to the GE \eqref{EqGE1} and assume that $\partial q$ is SCD \ssstar at $(\xb,-H(\xb))$. Further suppose that for every pair $(P,W)\in\M_{P,W}\partial q(\xb,-H(\xb))$ the matrix $P\nabla H(\xb)P+W$ is nonsingular. Then for every $\lambda>0$ there is a neighborhood $U$ of $\xb$ such that for every starting point $\ee x0\in U$ the procedure above is well-defined and generates a sequence $\ee xk$ converging superlinearly to $\xb$.
   \end{theorem}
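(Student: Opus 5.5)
The heuristic derivation preceding the statement already contains the skeleton of the argument. The plan is to make each of its $\oo(\cdot)$ steps rigorous and to supply the missing uniform invertibility.

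\textbf{Step 1: the approximation step.} Since $H$ is affine, it is Lipschitz with constant $l=\norm{\nabla H}$. The nonexpansiveness bounds give
\[
\norm{(\ee zk-\xb,\ee{z^*}k+H(\xb))}\leq c\norm{\ee xk-\xb},
\qquad c:=(1+\lambda l)+(1/\lambda+l).
\]
Hence $(\ee zk,\ee{z^*}k)\to(\xb,-H(\xb))$ within $\gph\partial q$ as $\ee xk\to\xb$. By the Proposition, $\partial q$ is an \SCD mapping, so $\M_{P,W}\partial q(\ee zk,\ee{z^*}k)$ is nonempty and the choice of $(\ee Pk,\ee Wk)$ is always possible.

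\textbf{Step 2: uniform invertibility, the main obstacle.} I claim there are $\delta>0$ and $\kappa>0$ such that for every $(z,z^*)\in\gph\partial q\cap\B_\delta(\xb,-H(\xb))$ and every $(P,W)\in\M_{P,W}\partial q(z,z^*)$ the matrix $P\nabla H(\xb)P+W$ is nonsingular with $\norm{(P\nabla H(\xb)P+W)^{-1}}\leq\kappa$.

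I would argue by contradiction. Take sequences $(z_j,z_j^*)\to(\xb,-H(\xb))$ and $(P_j,W_j)$ for which this fails. First I need $W_j$ bounded. Every $L\in\Sp\partial q(z,z^*)$ lies in $\ZmP$, and the basis $\rge(P,W)$ is essentially canonical: $P$ is the projection onto the first component of $L$ and $W$ is determined by $L$. So I will use the representation from \cite{Gfr25a}, which attaches to each $L\in\ZmP$ the unique pair $(P,W)$.
\begin{itemize}
\item If this representation is continuous on the closed set of subspaces, I pass to the limit directly.
\item If $W$ is unbounded, I rescale by $\norm{(P,W)}$. In the limit I get a pair $(\tilde P,\tilde W)$ with $\rge(\tilde P,\tilde W)\in\Sp\partial q(\xb,-H(\xb))$, by closedness of the SC derivative graph, which follows from its definition by a diagonal argument.
\end{itemize}
In either case I aim for a limiting pair in $\M_{P,W}\partial q(\xb,-H(\xb))$ for which $P\nabla H(\xb)P+W$ is singular, contradicting the hypothesis.

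A more robust route I would try first is to state the condition subspace-wise: $P\nabla H P+W$ is nonsingular iff $\{(Pp,Wp)\}\cap\{(v,-\nabla H v)\}=\{0\}$, using \eqref{EqPW3}. This transversality condition between $L$ and $\rge(I,-\nabla H(\xb))$ is stable under $d_\Z$-limits. It yields a uniform bound on the inverse when the basis is normalized, and then a bound in the $(P,W)$ basis via $\norm{(P,W)}\geq1$.

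\textbf{Step 3: one-step estimate.} Fix $\epsilon>0$. By \eqref{EqSCDssstar1} applied at $(\ee zk,\ee{z^*}k)$ there is $\ee pk$ satisfying \eqref{EqErrSemism} with error at most $\epsilon c\norm{\ee xk-\xb}$. Since $H$ is affine, the Taylor remainder vanishes, so the displayed identities hold with error $O(\epsilon)\norm{\ee xk-\xb}$. Multiplying by $\ee Pk$ and using \eqref{EqPW2} gives $(\ee Pk\nabla H\ee Pk+\ee Wk)(\ee Pk\ee pk-\ee{\triangle z}k)=O(\epsilon\norm{\ee xk-\xb})$. By Step 2,
\[
\norm{\ee x{k+1}-\xb}\leq\norm{\ee Pk\ee pk-\ee{\triangle z}k}+\norm{\ee zk+\ee Pk\ee pk-\xb}\leq C\epsilon\norm{\ee xk-\xb}.
\]

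\textbf{Step 4: conclusion.} Choose $\epsilon$ with $C\epsilon\leq1/2$ and let $U$ be a ball small enough that Steps 1–3 apply. Induction then keeps all iterates in $U$, shows the procedure is well defined, and gives linear convergence. Letting $\epsilon\downarrow0$ along the sequence yields superlinear convergence.
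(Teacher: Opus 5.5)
The paper does not prove this statement itself. It gives the heuristic $\oo(\cdot)$ computation, which openly assumes that $\ee Pk\nabla H(\ee xk)\ee Pk+\ee Wk$ is invertible with inverse bounded uniformly for $\ee xk$ near $\xb$, and then takes the theorem from \cite[Theorem 5.2]{gfrerer-outrata-valdman}. Your proposal replaces that citation with a self-contained argument, and its viable core is correct. You use that $H$ is affine here (no Taylor remainder, and $\nabla H(\ee xk)=\nabla H(\xb)$). You then close the one real gap of the heuristic through the transversality reformulation. Put $M:=\rge(I,-\nabla H(\xb))$ and $L=\rge(P,W)\in\ZmP$. Then $P\nabla H(\xb)P+W$ is singular iff $L\cap M\neq\{0\}$: a kernel vector $p$ satisfies $(I-P)p=(I-P)Wp=0$, and taking $p-(I-P)\nabla H(\xb)p$ as parameter shows $(p,-\nabla H(\xb)p)\in L\cap M$. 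Transversality is an open condition on the compact space $\Z_m$, and the graph of $\Sp(\partial q)$ is closed by your diagonal argument. A contradiction argument therefore turns the hypothesis at $(\xb,-H(\xb))$ into a uniform modulus $\beta>0$ with $\dist{\ell,M}\geq\beta\norm{\ell}$ for all $\ell\in L$, all $L\in\Sp(\partial q)(z,z^*)$, and all $(z,z^*)\in\gph\partial q$ near $(\xb,-H(\xb))$. The citation buys brevity and covers general $C^1$ mappings $H$ and other bases of the subspaces. Your route shows explicitly why it suffices to check nonsingularity only at the solution, as the paper does for the Stokes problem.

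What must be discarded is the bullet-point argument of Step 2. The map $L\mapsto(P,W)$ is well defined on $\ZmP$ but not continuous, and $W$ can be unbounded near the solution. In this very problem, at slip nodes approaching $\Sigma^i_{\rm trans}$ the block $W^i$ carries the factor $g_i/\norm{T^iu}\to\infty$. Rescaling by $\norm{(P,W)}$ then produces limits $(0,\tilde W)$ with $\tilde W$ of low rank. So $\rge(\tilde P,\tilde W)$ is not even $m$-dimensional, let alone given by a pair as in Definition \ref{DefZnP}; closedness of the SC derivative applies to the subspaces, not to these bases. Likewise, transversality is not ``stable under limits'' (it is open, not closed), so it is the compactness argument above that is needed.

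Finally, the passage to the $(P,W)$ basis does not follow from $\norm{(P,W)}\geq 1$; you need a lower bound such as $\norm{(Pp,Wp)}\geq\norm{p}$. More directly, let $r=(P\nabla H(\xb)P+W)p$ and $a:=Pp$. Then $(I-P)p=(I-P)r$, while $\ell:=(a,Wa-(I-P)\nabla H(\xb)a)\in L$ satisfies $\ell-(a,-\nabla H(\xb)a)=(0,Pr)$. Hence $\beta\norm{a}\leq\norm{Pr}$ and $\norm{p}\leq(1+1/\beta)\norm{r}$, independently of $\norm{W}$. With this in place, your Steps 3 and 4 go through as written.
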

   
   In order to globalize the SCD \ssstar Newton method, we use a~hybrid technique with a globally convergent method for finding zeros of the~sum of two maximally monotone operators, e.g., the well-known Douglas-Rachford method, cf. \cite{LiMe79}. In our case, one iterate of the Douglas-Rachford method  can be written down in the form $\ee x{k+1}=\T_\lambda^{DR}( \ee xk)$ with
   \[\T_\lambda^{DR}(x)= (I+\lambda H)^{-1}\big({\rm prox}_{\lambda q}(x-\lambda H(x))+\lambda H(x)).\]
   In this framework we use the residual
   \[r_\lambda(x):=(1+\frac 1\lambda)\norm{x-{\rm prox}_{\lambda q}(x-\lambda H(x))}.\]
   Note that $x$ is a solution for \eqref{EqGE1} if and only if $r_\lambda(x)=0$.
   Given the iterate $\ee xk$, we perform a line-search along the direction $\ee{\triangle x}k$ given either by the~solution of \eqref{EqNewtonDirX} or by the formula $\ee{\triangle x}k = \ee zk-\ee xk +\ee{\triangle z}k$. We are looking for the first index $j=0,1,\dots,N_\alpha$
   such that
   \[r_\lambda(\ee xk +2^{-j}\ee{\triangle x}k)\leq (1-\omega 2^{-j})r_\lambda(\ee xk)\]
   and set $\ee x{k+1}:=\ee xk+ 2^{-j}\ee{\triangle x}k$,
   where $\omega\in(0,1)$ and $N_\alpha\in {\mathbb{N}}$ are fixed parameters. If this fails, we compute the next iterate as
   \[\ee x{k+1}=\T_\lambda^{DR}(\ee  xk).\]
   We refer to \cite[Algorithm 3]{gfrerer-outrata-valdman} for a more advanced implementation of this procedure and its convergence properties.

\section{On the SCD \ssstar Newton method for the Stokes problem with Navier-Tresca boundary condition}

In this section we want to provide the implementation details for GE \eqref{EqGE} by the SSSN method. We assume that our unknowns $x=\left(\begin{smallmatrix}u\\\hat p\end{smallmatrix}\right)\in\R^{3n_u+n_p}$ are arranged as
\begin{align*}x=\begin{pmatrix}
  u_\N\\u_\I\\\hat p
\end{pmatrix}\mbox{ with } u_\N = \begin{pmatrix}u^1\\\vdots\\u^{n_s}\end{pmatrix},\ u_\I = \begin{pmatrix}u^{n_s+1}\\\vdots\\u^{n_u}\end{pmatrix},\\[3mm] u^i\in\R^3,\ i=1,\ldots,n_u,\ \hat p\in\R^{n_p}. \end{align*}

In our case the convex function $q$ has the separable structure
\begin{align*}q(u,\hat p)=\sum_{i\in\N}q_i(u^i)\quad\mbox{with}\quad q_i(u^i)=g_i||T^i u^i||+\delta_{S^i}(u^i),\\ S^i:=\left\{v\in\R^3: N^i v=0\right\}\end{align*}
and thus we have
\begin{equation}\label{EqSubdiff_q}\partial q(u,\hat p)=\prod_{i\in\N}\partial q_i(u^i)\times\{0_{3(n_u-n_s)}\}\times \{0_{n_p}\}\end{equation}
and
\[{\rm prox}_{\lambda q}(u,\hat p)=\begin{pmatrix}\prod_{i\in \N}{\rm prox}_{\lambda q_i}(u^i)\\u_\I\\\hat p\end{pmatrix}.\]
Further, it follows from \cite[Lemma 3.8]{gfrerer-outrata-valdman} that for any $(u^*,\hat p^*)\in\partial q(u,\hat p)$ and any $(P,W)\in\Sp(\partial q)\big((u,\hat p),(u^*,\hat p^*)\big)$, the matrices $P$ and $W$ are block diagonal matrices of the form
\begin{equation}\label{EqPW_representation} P=\begin{pmatrix}
  P_\N&&\\
  &P_\I&\\
  &&P_p
\end{pmatrix},\quad
W=\begin{pmatrix}
  W_\N&&\\
  &W_\I&\\
  &&W_p
\end{pmatrix}\end{equation}
with $P_\I = I_{3(n_u-n_s)}$, $W_\I=0_{3(n_u-n_s)\times 3(n_u-n_s)}$, $P_p=I_{n_p}$,  $W_p=0_{n_p\times n_p}$ and
\begin{equation}\label{EqPN_WN}P_\N=\begin{pmatrix}
  P^{1}&&\\
  &\ddots&\\
  &&P^{n_s}
\end{pmatrix},\quad W_\N=\begin{pmatrix}
  W^{1}&&\\
  &\ddots&\\
  &&W^{n_s}
\end{pmatrix},\end{equation}
where $(P^i,W^i)\in\M_{P,W}\partial q_i(u^i,{u^*}^i)$, $i\in \N$.
Conversely, for every collection of matrices $(P^i,W^i)\in \M_{P,W}\partial q_i(u^i, {u^*}^i)$, $i\in\N$, we can assemble a pair $(P,W)\in\M_{P,W}\partial q\big( (u,\hat p),(u^*,\hat p^*)\big)$ in this way.

Hence, in order to compute the quantities required for the SCD \ssstar Newton method, we only have to compute their counterparts for the $3$-dimensional functions $q_i$, $i\in\N$. By elementary calculus rules of convex analysis we immediately obtain
\begin{equation}\label{EqSubdiff_qi}\partial q_i(u)=\begin{cases}\emptyset&\mbox{if $N^i u\not=0$,}\\
g_i{T^i}^\trans\B_2+\rge N_i^\trans&\mbox{if $T^iu=0$ and $N^i u=0$, i.e., $u=0$}\\
  g_i{T^i}^\trans\frac{T^iu}{\norm{T^i u}}+\rge {N^i}^\trans&\mbox{if $N^i u=0$, $T^iu\not=0$,}
  \end{cases}\end{equation}
  where $\B_2$ denotes the unit ball in $\R^2$.
  
  In what follows we frequently use the following properties of $T^i$ and $N^i$, which follow immediately from the orthogonality of $Q^i$:   \[T^i{T^i}^\trans = I_2,\ N^i{N^i}^\trans = 1,\ T^i{N^i}^\trans = 0_2,\ {T^i}^\trans T^i + {N^i}^\trans N^i=I_3.\]
  Further, the symmetric $3\times 3$ matrices ${T^i}^\trans T^i$ and ${N^i}^\trans N^i$ represent the orthogonal projections onto $S^i = \ker N^i$ and ${S^i}^\perp= \rge {N^i}^\trans$ and for every $u\in S^i$ there holds $\norm{u}=\norm{T^i u}$.

  \begin{lemma}\label{LemProx}For every $\lambda>0$ and every $u\in\R^3$ there holds
    \[{\rm prox}_{\lambda q_i}(u) = \begin{cases}0&\mbox{if $\norm{T^i u}\leq \lambda g_i$,}\\
    \Big(1-\frac{\lambda g_i}{\norm{T^i u}}\Big){T^i}^\trans T^i u&\mbox{else.}
    \end{cases}\]
  \end{lemma}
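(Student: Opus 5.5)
We compute the proximal point $z={\rm prox}_{\lambda q_i}(u)$ from its optimality condition $0\in z-u+\lambda\partial q_i(z)$, i.e.\ $u-z\in\lambda\partial q_i(z)$. Since $q_i$ is proper, convex and lsc, this condition has exactly one solution. So it suffices to verify that the candidate given in the statement satisfies the inclusion, using the description \eqref{EqSubdiff_qi} of $\partial q_i$. An equivalent route is to change coordinates via the orthogonal matrix $Q^i$. Writing $w=T^iz\in\R^2$ and imposing $N^iz=0$, the problem reduces to $\min_w \frac12\norm{w-T^iu}^2+\lambda g_i\norm{w}$, whose solution is the well-known block soft-thresholding. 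We present the direct verification, which avoids justifying the reduction.

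\emph{Case $\norm{T^iu}\le\lambda g_i$.} Take $z=0$. By \eqref{EqSubdiff_qi} we need $u\in\lambda g_i{T^i}^\trans\B_2+\rge{N^i}^\trans$. Decompose $u={T^i}^\trans T^iu+{N^i}^\trans N^iu$, which holds because ${T^i}^\trans T^i+{N^i}^\trans N^i=I_3$. The first term equals $\lambda g_i{T^i}^\trans\xi$ with $\xi=T^iu/(\lambda g_i)\in\B_2$, and the second lies in $\rge{N^i}^\trans$. The degenerate case $g_i=0$, where $T^iu=0$, is covered by the same argument with $\xi=0$.

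\emph{Case $\norm{T^iu}>\lambda g_i$.} Set $z=\big(1-\frac{\lambda g_i}{\norm{T^iu}}\big){T^i}^\trans T^iu$. Then $N^iz=0$ because $N^i{T^i}^\trans=0$. Moreover $T^iz=\big(1-\frac{\lambda g_i}{\norm{T^iu}}\big)T^iu\neq0$, since $T^i{T^i}^\trans=I_2$ and the scalar factor is positive. In particular $T^iz/\norm{T^iz}=T^iu/\norm{T^iu}$. Hence, by the third line of \eqref{EqSubdiff_qi},
\[
\lambda\partial q_i(z)=\lambda g_i{T^i}^\trans\frac{T^iu}{\norm{T^iu}}+\rge{N^i}^\trans .
\]
On the other hand,
\[
u-z={N^i}^\trans N^iu+\frac{\lambda g_i}{\norm{T^iu}}{T^i}^\trans T^iu ,
\]
which lies in this set. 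Therefore $u-z\in\lambda\partial q_i(z)$, as required.

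The only delicate point is the second case. There one must check that $T^iz\neq0$, so that the single-valued branch of \eqref{EqSubdiff_qi} applies, and that the normalized direction $T^iz/\norm{T^iz}$ coincides with $T^iu/\norm{T^iu}$. Both facts follow from the identities for $T^i$ and $N^i$ together with the positivity of the scalar factor. Everything else is direct substitution, and uniqueness of the prox then yields the lemma.
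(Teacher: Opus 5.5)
Your proof is correct, but it takes a different route from the paper. The paper \emph{derives} the formula. It uses $S^i=\ker N^i=\rge {T^i}^T$ to write feasible points as ${T^i}^T w$. With $T^i{T^i}^T=I_2$ and the orthogonality of $Q^i$, the prox problem then reduces to $\min_w \frac1{2\lambda}\norm{w-T^iu}^2+g_i\norm{w}$, i.e.\ ${\rm prox}_{\lambda g_i\norm{\cdot}}(T^iu)$. The paper then imports the known block soft-thresholding formula for the Euclidean norm. This is exactly the ``equivalent route'' you mention and set aside.

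You instead \emph{verify} the given candidate against the optimality condition $u-z\in\lambda\partial q_i(z)$. You use only the subdifferential description \eqref{EqSubdiff_qi} already established in the paper, the identities for $T^i,N^i$, and uniqueness of the prox.

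The paper's argument is shorter and shows where the formula comes from. However, it relies on an external result and on the reduction step. That step needs the identity $\norm{{T^i}^T w-u}^2=\norm{w-T^iu}^2+\norm{N^iu}^2$, and the squares are in fact missing from the paper's displayed objective. Your argument is self-contained and checks both branches explicitly. In particular, you check that $T^iz\neq0$ and $T^iz/\norm{T^iz}=T^iu/\norm{T^iu}$, so the single-valued branch of \eqref{EqSubdiff_qi} legitimately applies. You also handle the degenerate case $g_i=0$. Its only drawback is that it presupposes the answer, which is harmless here because the statement supplies it.
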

  \begin{proof}
    By taking into account the relationship $S^i = \ker N^i=\rge {T^i}^\trans$, we obtain that ${\rm prox}_{\lambda q_i}(u)={T^i}^\trans z$ with
    \begin{align*}z\in\argmin\left\{\frac 1{2\lambda}\norm{{T^i}^\trans z-u}+g_i\norm{T^i{T^i}^\trans z}\right\}\\ =\argmin\left\{\frac 1{2\lambda}\norm{z-T^iu}+g_i\norm{z}\right\}\\
    ={\rm prox}_{\lambda g_i\norm{\cdot}}(T^i u),
    \end{align*}
    where we have used the equality $T^i{T^i}^\trans=I_2$. By using the well-known formulas for the proximal mapping of the Euclidean norm, see, e.g. \cite[Example 6.19]{Be17}, the assertion follows.
      \end{proof}

      In what follows we partition $\gph\partial q_i$ into 3 sets
      \[\gph \partial q_i=\Sigma^i_{\rm slip} \cup \Sigma^i_{\rm stick}\cup \Sigma^i_{\rm trans}\]
      with
      \begin{align*}
        \Sigma^i_{\rm slip}&=\{(u,u^*)\in\gph\partial q_i\mv u\not=0\},\\
        \Sigma^i_{\rm stick}&=\{(u,u^*)\in\gph\partial q_i\mv u=0,\ \norm{T^iu^*}<g_i\},\\
        \Sigma^i_{\rm trans}&=\{(u,u^*)\in\gph\partial q_i\mv u=0,\ \norm{T^iu^*}=g_i\}.
      \end{align*}
      Note that both $\Sigma^i_{\rm slip}$ and $\Sigma^i_{\rm stick}$ are relatively open in $\gph \partial q_i$, i.e., given $(u,u^*)\in \Sigma^i_{\rm slip}$ ($\Sigma^i_{\rm stick}$), then any $(v,v^*)\in\gph \partial q_i$ sufficiently close to $(u,u^*)$ also belongs to $\Sigma^i_{\rm slip}$ ($\Sigma^i_{\rm stick}$). This does not hold true for the ``transition part'' $\Sigma^i_{\rm trans}$ which is  the boundary of both the ``slipping part'' $\Sigma^i_{\rm slip}$ and the~``sticking part'' $\Sigma^i_{\rm stick}$.

      \begin{lemma}Let $(u, u^*)\in\gph\partial q_i$.
      \begin{enumerate}
        \item[(i)]If $(u,u^*)\in\Sigma^i_{\rm slip}$ then $\M_{P,W}\partial q_i(u, u^*)=\{(P,W)\}$ is a singleton with
        \[P={T^i}^\trans T^i,\ W=\frac {g_i}{\norm{T^i u}}{T^i}^\trans\Big(I_2-\frac{(T^i u)(T^i u)^\trans}{\norm{T^i u}^2}\Big)T^i+{N^i}^\trans N^i.\]
        \item[(ii)]If $(u,u^*)\in\Sigma^i_{\rm stick}$ then $\M_{P,W}\partial q_i(u, u^*)=\{(0_{3\times 3},I_3)\}$ is a singleton.
        \item[(iii)]If $(u,u^*)\in\Sigma^i_{\rm trans}$ then
        $\M_{P,W}\partial q_i(u, u^*)\supset\{(0_{3\times 3},I_3)\}$.
      \end{enumerate}
      Further, $\OO_{\partial q_i}\supset\Sigma^i_{\rm slip}\cup \Sigma^i_{\rm stick}$.
      \end{lemma}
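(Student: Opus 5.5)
The strategy is to compute the tangent cone $T_{\gph\partial q_i}(u,u^*)$ explicitly on the two relatively open pieces $\Sigma^i_{\rm slip}$ and $\Sigma^i_{\rm stick}$. On each piece the graph is locally a smooth $3$-dimensional manifold, so the tangent cone is a $3$-dimensional subspace; this gives $\OO_{\partial q_i}\supset\Sigma^i_{\rm slip}\cup\Sigma^i_{\rm stick}$. Because these pieces are relatively open, the tangent spaces vary continuously along nearby graph points. Hence $\Sp\partial q_i(u,u^*)$ is just this single tangent space, and $\M_{P,W}\partial q_i(u,u^*)$ reduces to the unique symmetric pair $(P,W)$ with properties (ii), (iii) of Definition \ref{DefZnP} representing it. Uniqueness of this pair holds because $P$ must be the orthogonal projection onto the first-coordinate projection of $L$, and $W$ is then fixed by $W(I-P)=I-P$ together with $L=\rge(P,W)$.

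\emph{Stick case.} By \eqref{EqSubdiff_qi}, near $(0,u^*)$ with $\norm{T^iu^*}<g_i$ the graph points with $u\neq0$ have $\norm{T^iv^*}=g_i$. Such points cannot be close, so locally
\[
\gph\partial q_i=\{0\}\times\big(g_i{T^i}^\trans\B_2+\rge{N^i}^\trans\big).
\]
Near $u^*$ the second set is an open subset of $\R^3$, since ${T^i}^\trans\inn\B_2+\rge {N^i}^\trans$ is open in $\R^3$ by orthogonality of $Q^i$. The tangent cone is therefore $\{0\}\times\R^3=\rge(0,I_3)$, and $(0_{3\times3},I_3)$ clearly satisfies (ii) and (iii).

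\emph{Slip case.} For $u\in S^i\setminus\{0\}$ we parametrize the graph locally as
\[
(u,u^*)=\Big({T^i}^\trans z,\ g_i{T^i}^\trans\frac{z}{\norm z}+{N^i}^\trans t\Big),\qquad z\in\R^2\setminus\{0\},\ t\in\R.
\]
This is a smooth injective immersion, since $z=T^iu$ and $t=N^iu^*$ recover the parameters. Its tangent space is spanned by the vectors
\[
\Big({T^i}^\trans dz,\ \tfrac{g_i}{\norm z}{T^i}^\trans\big(I_2-\tfrac{zz^\trans}{\norm z^2}\big)dz\Big),\qquad (0,{N^i}^\trans dt).
\]
Taking $p={T^i}^\trans dz+{N^i}^\trans dt$ and using $T^i{T^i}^\trans=I_2$, $T^i{N^i}^\trans=0$, this subspace equals $\rge(P,W)$ with $P$ and $W$ as stated. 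It remains to verify $P^2=P$ and $W(I-P)=W{N^i}^\trans N^i={N^i}^\trans N^i$, both of which are immediate from the orthogonality relations.

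\emph{Transition case.} For $(0,u^*)\in\Sigma^i_{\rm trans}$ we have $T^iu^*=g_i w$ with $\norm w=1$. Take $u^*_k:=u^*-\frac1k{T^i}^\trans w$; then $(0,u^*_k)\in\Sigma^i_{\rm stick}\subset\OO_{\partial q_i}$ converges to $(0,u^*)$, and the tangent spaces along this sequence are constantly $\rge(0,I_3)$. Hence $\rge(0,I_3)\in\Sp\partial q_i(0,u^*)$, which gives (iii).

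The main obstacle is to argue cleanly that in the slip and stick cases no other limiting subspaces occur, and that the $(P,W)$ representation is unique. I will handle the first point by the local smooth-manifold description above, whose tangent spaces depend continuously on the base point. I will handle the second via the structural argument given at the start: $P$ is determined as the projection onto the first component of $L$, and $W$ is then determined by $W(I-P)=I-P$.
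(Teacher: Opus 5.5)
Your proposal is correct and takes essentially the paper's approach. You compute the tangent cone explicitly on the relatively open pieces $\Sigma^i_{\rm slip}$ and $\Sigma^i_{\rm stick}$, use continuity there to identify the SC derivative as that single subspace, and approximate points of $\Sigma^i_{\rm trans}$ from $\Sigma^i_{\rm stick}$; your sequence is the paper's $(1-\frac1k){T^i}^\trans T^iu^*+{N^i}^\trans N^iu^*$ with $k$ rescaled. You are more explicit than the paper in two places: the smooth parametrization for the slip tangent space, and the uniqueness of the symmetric pair $(P,W)$, where fixing $W$ on $\rge P$ genuinely uses symmetry via $PW=WP$. Like the paper's, your transition argument also tacitly needs $g_i>0$, since for $g_i=0$ the set $\Sigma^i_{\rm stick}$ is empty.
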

      \begin{proof}
        ad(i): Since $u\not = 0$ and $N^iu=0$, there must hold $T^iu\not=0$. Using  \eqref{EqSubdiff_qi} we obtain that
        \begin{align*}&\lefteqn{T_{\gph \partial q_i}(u, u^*)}\\
        &=\left\{\left(v, \frac {g_i}{\norm{T^i u}}{T^i}^\trans\Big(I_2-\frac{(T^i u)(T^i u)^\trans}{\norm{T^i u}^2}\Big)T^i v + {N^i}^\trans w\right)\bmv N^iv=0, w\in\R\right\}\end{align*}
        and by taking into account that the orthogonal projections onto $\ker N^i$ and $\rge {N^i}^\trans$ are represented by ${T^i}^\trans T^i$ and
        ${N^i}^\trans N^i$, the formula
        \begin{align*}&\lefteqn{T_{\gph \partial q_i}(u, u^*)}\\
        &=\rge\Big({T^i}^\trans T^i,\frac {g_i}{\norm{T^i u}}{T^i}^\trans\Big(I_2-\frac{(T^i u)(T^i u)^\trans}{\norm{T^i u}^2}\Big)T^i+{N^i}^\trans N^i\Big)\in \Z_d\end{align*}
        follows. Hence, $(u,u^*)\in\OO_{\partial q_i}$ and therefore $\Sigma^i_{\rm slip}\subset\OO_{\partial g_i}$.  For every sequence $(u_k,u_k^*)\longsetto{\gph \partial q_i}(u,u^*)$ we have $(u_k,u_k^*)\in \Sigma^i_{\rm slip}\subset\OO_{\partial g_i}$ for all $k$ sufficiently large and, together with $\lim_{k\to\infty}T^iu_k=T^i u$, we conclude that $\lim_{k\to\infty} T_{\gph \partial q_i}(u_k, u_k^*)=T_{\gph \partial q_i}(u, u^*)$ in $\Z_3$. This verifies our assertion.

        ad(ii): Since $u^*={T^i}^\trans(T^iu^*) +{N^i}^\trans (N^iu^*)$ and $T^iu^*\in\inn(g_i\B_2)$, we conclude from \eqref{EqSubdiff_qi} that
        \[T_{\gph \partial q_i}(u, u^*)=\{(0,{T^i}^\trans v +{N^i}^\trans w)\mv v\in \R^2, w\in\R\}=\{0\}\times\R^3\in\Z_3.\]
        Now the assertion together with $\Sigma^i_{\rm stick}\subset\OO_{\partial q_i}$ easily follows.

        ad(iii): Since the sequence 
        \begin{align*}(u_k,u_k^*)&=(0,(1-\frac 1k){T^i}^\trans T^iu^*+{N^i}^\trans N^iu^*\longsetto{\Sigma_{\rm stick}}(u,u^*)
        \end{align*}
        and the tangent cone to $\gph\partial q_i$ at these points is always $\rge(0_{3\times 3},I_3)$, the~claim holds true.

      \end{proof}
      \begin{remark}In order to perform the SCD \ssstar Newton method it is sufficient to compute at each point belonging to $\gph \partial q_i$ {\bf just one} element of the SC derivative and we do not need to compute the whole SC derivative.
        For the sake of completeness we state (without proof) that actually\linebreak{} $\OO_{\partial q_i}=\Sigma^i_{\rm slip}\cup \Sigma^i_{\rm stick}$ and for every $(0,u^*)\in\Sigma^i_{\rm trans}$ there holds
        \[\M_{P,W}\partial q_i(0,u^*)=\{(0_{3\times 3},I_3), (vv^\trans,I_3-vv^\trans)\}\mbox{ with }v=\frac{{T^i}^TT^iu^*}{g_i}.\]
      \end{remark}
      We will now show that all the assumptions of Theorem \ref{ThConv} are fulfilled.

      Observe that $\gph \partial q_i$ has the representation
      \[\gph \partial q_i=\left\{(u,u^*)\in\R^3\times\R^3\bmv\exists v\in\R^2:\begin{array}{l}
      N^i u=0,\
      T^i u^*= g_i v, \norm{v}^2\leq 1,\\
      (v^\trans T^iu)^2=\norm{T^iu}^2,\ v^\trans T^iu\geq 0
      \end{array}\right\}\]
      and is therefore  a closed subanalytic set. Thus, $\partial q_i$ is SCD \ssstar at every point $(u,u^+)\in\gph\partial q_i$ and, by using \eqref{EqSubdiff_q} it is easy to see that the~same holds true for the whole mapping $\partial q$.

      Next, let $\bar x=(\bar u,\bar p)$ denote the solution to \eqref{EqGE} and we have to show that $P\nabla H(\xb)P+W$ is nonsingular for every pair $(P,W)\in\M_{P,W}\partial q(\bar x)$. To this aim we partition the matrices $A_\kappa$ and $B$ into
      \[A_\kappa =\begin{pmatrix}A_{\N\N}&A_{\I\N}^\trans\\A_{\I\N}&A_{\I\I}\end{pmatrix},\quad B=\begin{pmatrix}B_\N&B_\I\end{pmatrix},
      \]
      where $A_{\N\N}\in\R^{3n_s\times 3n_s}$, $A_{\I\N}\in\R^{3(n_u-n_s)\times 3n_s}$, $A_{\I\I}\in\R^{3(n_u-n_s)\times 3(n_u-n_s)}$,\linebreak{} $B_\N\in\R^{n_p\times 3n_s}$ and $B_\I\in\R^{n_p\times 3(n_u-n_s)}$. Now consider any $(P,W)\in\M_{P,W}\partial q(\bar x)$ and its representation \eqref{EqPW_representation}. It follows that
      \[P\nabla H(\xb)P+W =\begin{pmatrix}P_\N A_{\N\N}P_\N+W_\N&P_\N A_{\I\N}^\trans&-P_\N B_\N^\trans\\
      A_{\I\N}P_\N&A_{\I\I}&-B_\I^\trans\\
      B_\N P_\N& B_\I&E
      \end{pmatrix}.\]
      Consider now the blocks
      \[\hat A:= \begin{pmatrix}
        P_\N A_{\N\N}P_\N+W_\N&P_\N A_{\I\N}^\trans\\
      A_{\I\N}P_\N&A_{\I\I}
      \end{pmatrix},\quad \hat B:=\begin{pmatrix}
        B_\N P_\N&B_\I
      \end{pmatrix}.\]
      We first claim that $\hat A$ is positive definite. Assume on the contrary that there is a vector $z\not=0$ satisfying $z^\trans \hat A z\leq0$. By setting
      \[z:=\begin{pmatrix}
        z_\N\\z_\I
      \end{pmatrix},\  \hat z:= \begin{pmatrix}
        P_\N z_\N\\z_\I
      \end{pmatrix}\]
      we obtain that
      \[0\geq z^\trans\hat A z=\hat z^\trans A_\kappa \hat z+ z_\N^\trans W_\N z_\N.\]
      Thus, since $A_\kappa$ is positive definite and $W_\N$ is symmetric positive semidefinite, it follows that $z_\I=0$, $P_\N z_\N=0$ and $W_\N z_\N=0$. Hence $z_\N=(I-P_\N)z_\N$ and, by using \eqref{EqPW1}, we obtain that
      \[0= W_\N z_\N=W_\N(I-P_\N)z_\N=(I-P_\N)z_\N=z_\N\]
      and consequently $z=0$, a contradiction. This implies that $\hat A$ is positive definite and it follows that the matrix
      \[P\nabla H(\xb)P+W=\begin{pmatrix}\hat A&-\hat B^T\\\hat B&E\end{pmatrix}\] is nonsingular if and only if
      $\hat B\hat A^{-1}\hat B^\trans+E$ is nonsingular. Since $E$ is symmetric and positive semidefinite, this is in turn equivalent to the condition \begin{equation}\label{EqSurj}\hat B^\trans s=0,\ Es=0\ \Rightarrow\ s=0.\end{equation}
      Now consider the matrix
      \begin{equation}\label{EqMatrNoSlip}\begin{pmatrix}A_{\I\I}&-B_\I^\trans\\ B_\I&E\end{pmatrix}\end{equation}
       corresponding to the discretized Stokes problem with no-slip boundary conditions which has a unique solution. Hence, the matrix \eqref{EqMatrNoSlip} is nonsingular and our considerations above imply the condition $B_\I^\trans s=0,Es=0\Rightarrow s=0$. From this we can immediately deduce that \eqref{EqSurj} holds true and therefore the~matrix  $P\nabla H(\xb)P+W$ is nonsingular.

       We may conclude that  all the conditions of Theorem \ref{ThConv} are fulfilled and therefore the SCD \ssstar Newton method converges locally superlinearly to the solution of the Stokes problem with stick-slip boundary conditions.

       At the end of this section let us summarize all the steps necessary to implement the SCD \ssstar Newton method for solving \eqref{EqGE}. 
          
       \begin{enumerate}
       \item Choose $\lambda>0$, starting point 
       \[\ee x0=\left(\begin{pmatrix}u_\N\\u_I\\\hat p\end{pmatrix}\right)\in\R^{3n_u+n_p},\] tolerance $\epsilon>0$ and parameters $\omega\in(0,1)$, $N_\alpha\in \mathbb{N}$. Set the iteration counter $k=0$.
       \item Compute
       \[H(\ee x0)=:\begin{pmatrix}y^1\\\vdots\\y^{n_u}\\w\end{pmatrix}=:\begin{pmatrix}y_\N\\y_\I\\w\end{pmatrix}\]
       and
       \[z_\N:=\begin{pmatrix}{\rm prox}_{\lambda q_1}(u^1-\lambda y^1)\\\vdots\\ {\rm prox}_{\lambda q_{n_s}}(u^{n_s}-\lambda y^{n_s})\end{pmatrix},\ r_\N:=z_\N-u_\N\]
       by using Lemma \ref{LemProx} and set
       \[\ee r0:=r_\lambda(\ee x0)=\Big(1+\frac 1\lambda\Big)\sqrt{\lambda^2(\norm{y_\I}^2+\norm{w}^2)+\norm{r_\N}^2}.\]
       \item If $\ee rk\leq \epsilon\ee r0$ stop the algorithm.
       \item Compute for each $i\in\N$ the matrices
       \begin{align*}\hskip-0.6cm(P^i,W^i)&:=\begin{cases}
       \left(0_{3\times 3},I_3\right)&\hskip-0.8cm\mbox{if $z^i=0$,}\\
       \left({T^i}^\trans T^i,\frac {g_i}{\norm{T^i z^i}}{T^i}^\trans\Big(I_2-\frac{(T^i z^i)(T^i z^i)^\trans}{\norm{T^i z^i}^2}\Big)T^i+{N^i}^\trans N^i\right)&\mbox{else}\end{cases}
       \end{align*}
       and assemble the matrices $P_\N$, $W_\N$ according to \eqref{EqPN_WN}.
       \item Compute the Newton direction $\ee {\triangle x}k$ as solution of the system
       \begin{equation}\label{EqNewtSyst}
         \hskip-0.6cm\begin{pmatrix}P_\N A_{\N\N}+W_\N&P_\N A_{\I\N}^\trans&-P_\N B_\N^\trans\\
      A_{\I\N}&A_{\I\I}&-B_\I^\trans\\
      B_\N & B_\I&E
      \end{pmatrix}\triangle x=\begin{pmatrix}\big(W_\N+\frac1\lambda P_\N\big)r_\N\\-y_\I\\-w\end{pmatrix}\end{equation}
      and set $\alpha:=1$.
      \item Compute the trial point
      \[\tilde x:=\ee xk +\alpha\ee{\triangle x}k=:\begin{pmatrix}u_\N\\u_\I\\\hat p\end{pmatrix}\]
       together with
       \begin{align*}
       \hskip-2.2pt H(\tilde x)&=:\begin{pmatrix}y^1\\\vdots\\y^{n_u}\\w\end{pmatrix}=:\begin{pmatrix}y_\N\\y_\I\\w\end{pmatrix},\ 
       z_\N:=\begin{pmatrix}{\rm prox}_{\lambda q_1}(u^1-\lambda y^1)\\\vdots\\ {\rm prox}_{\lambda q_{n_s}}(u^{n_s}-\lambda y^{n_s})\end{pmatrix},\\ r_\N&:=z_\N-u_\N
       \end{align*}
       and set
       \[\tilde r:=r_\lambda(\tilde x)=\Big(1+\frac 1\lambda\Big)\sqrt{\lambda^2(\norm{y_\I}^2+\norm{w}^2)+\norm{r_\N}^2}.\]
       \item If $\tilde r\leq (1-\omega\alpha)\ee rk$ set 
       \[\ee x{k+1}:=\tilde x,\ \ee r{k+1}:=\tilde r.\]
       Increase the iteration counter $k:=k+1$ and go to step 3.
       \item If $\alpha>2^{-N_\alpha}$ set $\alpha:=\alpha/2$ and go to step 6.
       \item Set 
       \[\begin{pmatrix}u_\N\\u_\I\\ \hat p\end{pmatrix}:=\ee x{k+1}:=\T^{DR}_\lambda(\ee xk),\] i.e., we compute the next iterate by one step of the Douglas-Rachford method, and, after evaluating
           \begin{align*}&H(\ee x{k+1})=:\begin{pmatrix}y_\N\\y_\I\\w\end{pmatrix},\
       z_\N:=\begin{pmatrix}{\rm prox}_{\lambda q_1}(u^1-\lambda y^1)\\\vdots\\ {\rm prox}_{\lambda q_{n_s}}(u^{n_s}-\lambda y^{n_s})\end{pmatrix},\ r_\N:=z_\N-u_\N,\\
       &\ee r{k+1}:=r_\lambda(\ee x{k+1})=\Big(1+\frac 1\lambda\Big)\sqrt{\lambda^2(\norm{y_\I}^2+\norm{w}^2)+\norm{r_\N}^2},\end{align*}
       increase the iteration counter $k:=k+1$ and go to step 3.
      \end{enumerate}
      We omit the details of implementing the Douglas-Rachford method, because in our numerical experiments it was never encountered. 


\section{Numerical experiments}
The following section shows two examples where the SSSN method has been tested. In the first one, the convergence rate is examined on a simple unit cube region as a function of the mesh size and the settings of the stick-slip condition parameters. In the second example, the behavior of the method for various slip condition parameters is monitored on a more complex Cerebral Aneurysm region.

{
In our implementation we solved the linear system \eqref{EqNewtSyst} as follows. Denoting 
\[\triangle x=\begin{pmatrix}\triangle u_\N\\\triangle u_\I\\\triangle \hat p\end{pmatrix},\]
we eliminated 
\[\triangle u_\I = A_{\I\I}^{-1}\big(-y_\I-A_{\I\N}\triangle u_\N+B_\I^\trans\triangle \hat p\big)\]
to obtain the reduced system
\begin{align*}&\begin{pmatrix}P_\N(A_{\N\N}-A_{\I\N}^\trans A_{\I\I}^{-1}A_{\I\N})& -P_\N\big(B_\N^\trans- A_{\I\N}^\trans A_{\I\I}^{-1}B_\I^\trans\big)\\
B_\N-B_\I A_{\I\I}^{-1}A_{\I\N}& E+B_\I A_{\I\I}^{-1}B_\I^\trans\end{pmatrix}\begin{pmatrix}\triangle u_\N\\\triangle \hat p\end{pmatrix}\\
&\qquad\qquad\qquad\qquad\qquad\qquad=\begin{pmatrix}\big(W_\N+\frac1\lambda P_\N\big)r_\N+A_{\I\N}^\trans A_{\I\I}^{-1}y_\I\\
-w+B_I A_{\I\I}^{-1}y_\I\end{pmatrix}\end{align*}
which was solved by GMRES. In order to efficiently perform the matrix-times-vector multiplications needed by GMRES, we computed in advance a~Cholesky factorization of the sparse matrix $A_{\I\I}$.}

\subsection*{Example 1}
Let $\Omega=(0,1)^3$ with the boundary $\partial\Omega$ which consist of three non-empty disjoint parts such that $\partial \Omega = \overline{\gamma}_D \cup\overline{\gamma}_N\cup\overline{\gamma}_S$.  Zero Dirichlet boundary conditions are applied on $\gamma_D$, Neumann condition with $\bsigma_n=\nu\Delta\bu_{exp}\bv-p_{exp}\bv$ on $\gamma_N$, and impermeability and a stick-slip boundary condition are applied on $\gamma_S$. Function $\bff$ in \eqref{eq1:1} is given as $\bff=- 2\nu \, \nabla \cdot \nabla_S\,\bu_{exp} + \nabla p_{exp}$, 
where $\bu_{exp}=(u_{exp,1},u_{exp,2},u_{exp,3})$,
\begin{alignat*}{2}
&u_{ex}(x,y,z)&&= 4z(1-z)\sin(2\pi y)\left(1-\cos(2\pi x)\right),\\
&u_{ey}(x,y,z)&&= 4z(1-z)\sin(2\pi x)(\cos(2\pi y)-1),\\
&u_{ez}(x,y,z)&&=0,\\
&p_e(x,y,z)&&=2\pi\left(\cos(2\pi y)-\cos(2\pi x)-\cos(2\pi z)\right),
\end{alignat*}
and the direction $(x,y,z)=(x_1,x_2,x_3)$. Kinematic viscosity v=0.9. To demonstrate the significance of the choice of adhesive coefficient $\kappa$ and slip limit value $g$, experiments were performed for adhesive coefficient $\kappa = 5$ and three different values of slip limit: $g=0$ (for this choice of $\kappa$ and $g$, pure slip occurs in this example and the stick-slip boundary condition is described by the Navier boundary condition), $g=5$ (for this choice of $\kappa$ and $g$, simultaneous slip and adhesion occur in this example, and the stick-slip boundary condition is described by the Navier-Tresca boundary condition), and $g=10$ (for this choice of $\kappa$ and $g$, pure adhesion occurs in this example, and the solution to this example looks the same as if zero Dirichlet was applied on the boundary $\gamma_S$). Let us recall that different uniform meshes are characterized by parameters $n_p$ and $n_t$, which represent the number of nodes and tetrahedra, respectively. The number of nodes on $\gamma_S$ is described by $n_s$. The accuracy of Newton method is set to $\epsilon=1e^{-8}$ and the accuracy of the inner GMRES solver is controlled by the adaptive accuracy described for each Newton iteration $k>0$ as $$inner_{tol}^k:=\min(r_{tol}\times err^{k-1}, c_{fact}\times tol^{k-1}),$$ 
where $err$ is the error of the Newton method from the previous iteration with the constants $r_{tol}=0.95$ and $c_{fact}=0.8$. The scheme of the $\Omega$ and structure of the mesh are shown in the figures \ref{fig5:1}, \ref{fig5:2}, respectively.
\begin{figure}[ht!]
\begin{minipage}{.49\textwidth}
\centering
\begin{tikzpicture}[scale=3.0]

\fill[line width=2.pt,color=black,fill=ffffff,fill opacity=0.8] (0,0,1) -- (1,0,1) -- (1,0,0) -- (0,0,0) -- cycle;
\fill[line width=2.pt,color=black,fill=ffffff,fill opacity=0.8] (0,0,0) -- (1,0,0) -- (1,1,0) -- (0,1,0) -- cycle;
\fill[line width=2.pt,color=black,fill=blue,fill opacity=0.3] (0,0,0) -- (0,1,0) -- (0,1,1) -- (0,0,1) -- cycle;

\draw[dashed](0,1,0)--(0,0,0)--(1,0,0);
\draw[dashed](0,0,0)--(0,0,1);

\fill[line width=2.pt,color=black,fill=blue,fill opacity=0.3] (1,0,1) -- (1,0,0) -- (1,1,0) -- (1,1,1) -- cycle;
\fill[line width=2.pt,color=black,fill=red,fill opacity=0.3] (0,1,1) -- (1,1,1) -- (1,1,0) -- (0,1,0) -- cycle;
\fill[line width=2.pt,color=black,fill=ffffff,fill opacity=0.3] (0,0,1) -- (1,0,1) -- (1,1,1) -- (0,1,1) -- cycle;

\draw[ultra thick](0,0,1)--(0,1,1)--(1,1,1)--(1,0,1)--(0,0,1);
\draw[ultra thick](0,1,0)--(1,1,0)--(1,0,0);
\draw[ultra thick](0,1,0)--(0,1,1);
\draw[ultra thick](1,1,0)--(1,1,1.0);
\draw[ultra thick](1,0,0)--(1,0,0.98);

 \draw[font=\large] (0.35,0.6,0.5) node[below]{$\Omega$};
 \draw (0.4,1.0,0.2) node[below]{$\color{red}\gamma_S$};
 \draw (0.4,0,0.2) node[below]{$\gamma_D$};

 \draw (0.0,0.5,0.45) node[below]{$\color{blue}\gamma_N$};
  \draw (1,0.5,0.45) node[below]{$\color{blue}\gamma_N$};
   \draw (0.5,0.5,0.0) node[below]{$\gamma_D$};
    \draw (0.6,0.6,1) node[below]{$\gamma_D$};

\draw[->][thick] (xyz cs:x=-0.15) -- (xyz cs:x=1.2) node[below] {$x_1$};
\draw[->][thick] (xyz cs:y=-0.15) -- (xyz cs:y=1.2) node[left] {$x_2$};

\draw[->][thick] (0.07,0.07,-0.07) -- (-0.52,-0.52);
\draw (-0.41,-0.47) node [below]{$x_3$};

\end{tikzpicture}
 \caption{Scheme of $\Omega$}
 \label{fig5:1}
 \end{minipage}
\begin{minipage}{.49\textwidth}
\centering
\includegraphics[width=1.0\linewidth]{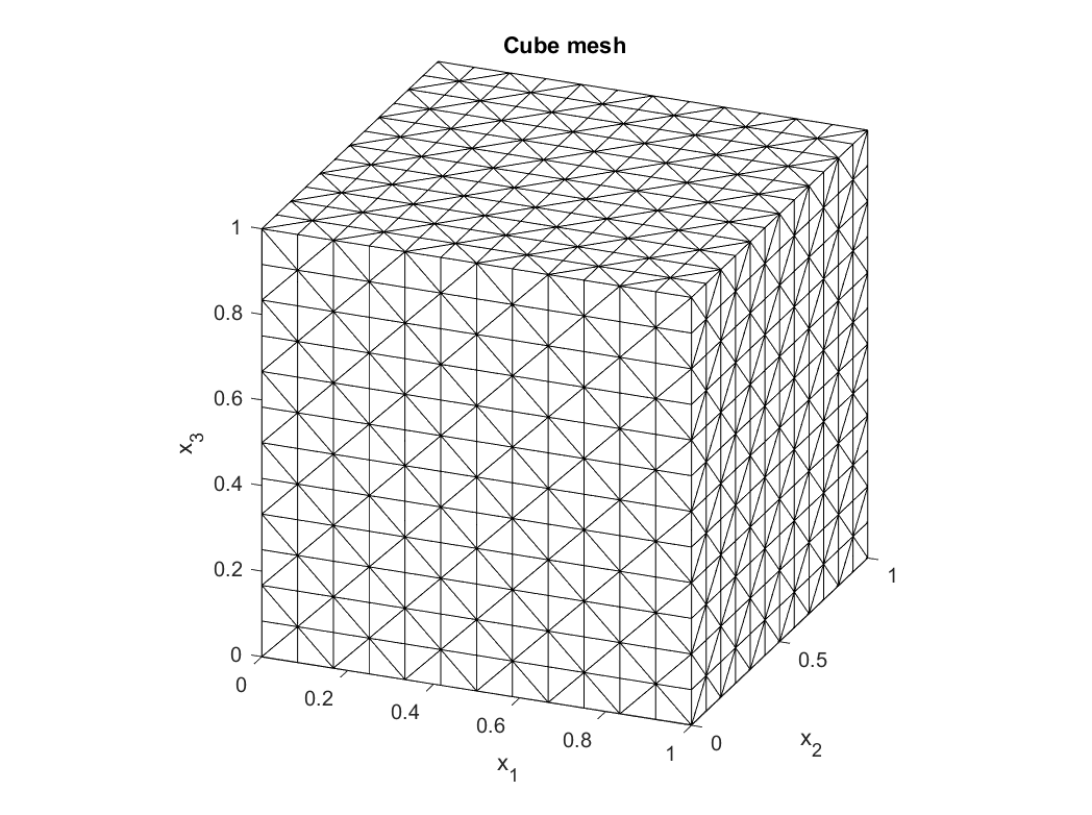}\\
\captionof{figure}{Cube mesh}
\label{fig5:2}
\end{minipage}
\end{figure}

Tangential velocity field is shown in the figures \ref{fig5:3} - \ref{fig5:4}.

\begin{figure}[ht!]
\begin{minipage}{.49\textwidth}
\centering
\includegraphics[width=\linewidth]{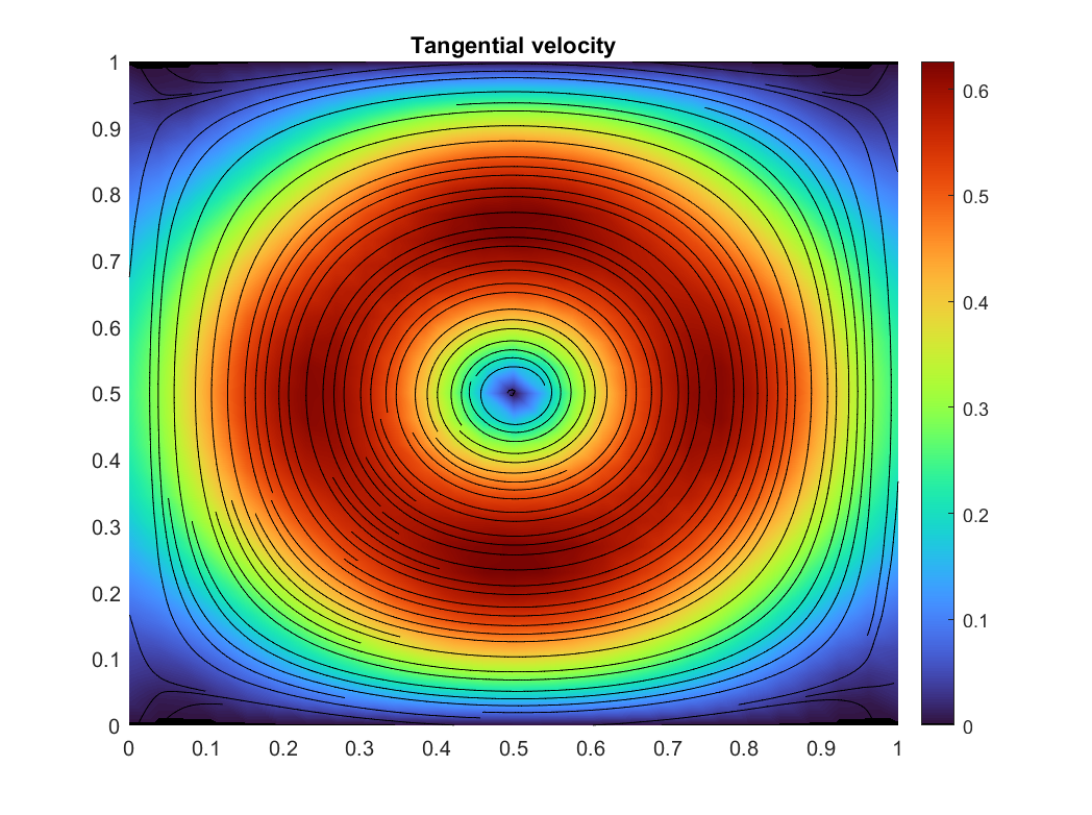}\\
\captionof{figure}{$\vg=0$}
\label{fig5:3}
\end{minipage}
\begin{minipage}{.49\textwidth}
\centering
\includegraphics[width=\linewidth]{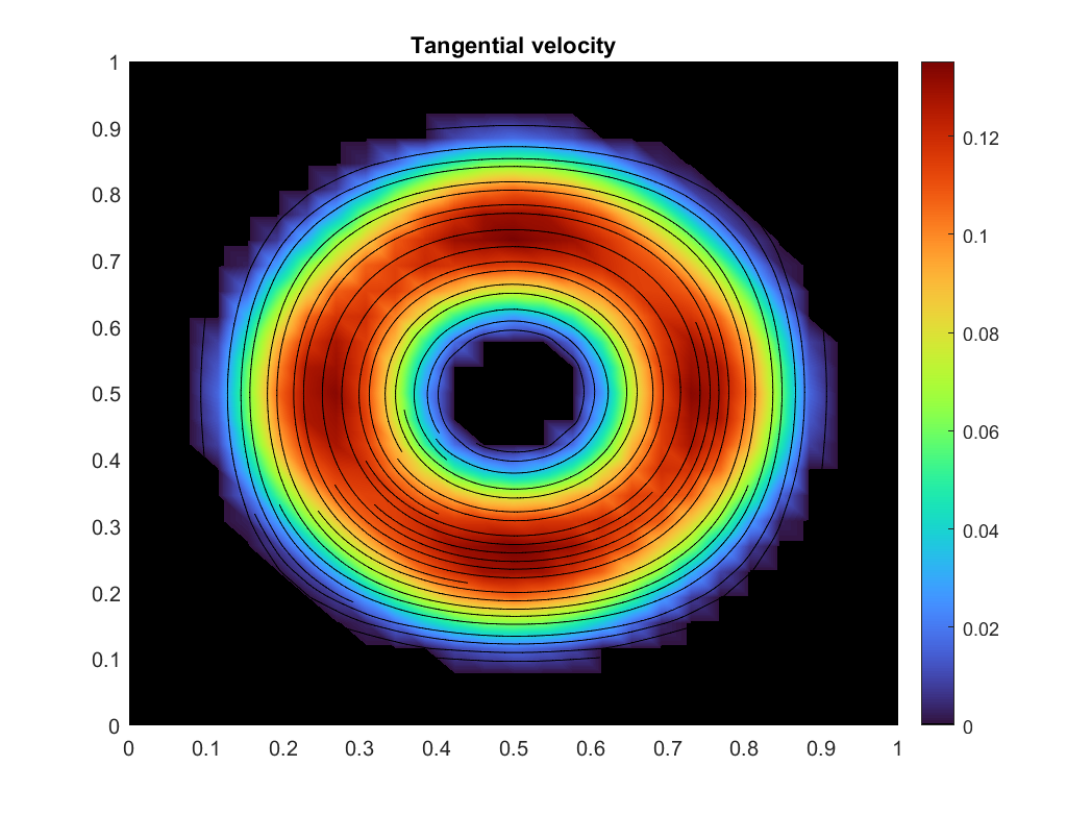}\\
\captionof{figure}{$\vg=5$}
\label{fig5:4}
\end{minipage}
\end{figure}

Results from each experiment are displayed in Table \ref{tab:ex1}, where $It$ is the~number of SSSN iterations, and $It_{in}$ is the sum of the GMRES iterations from all SSSN iterations. Table \ref{tab:ex1} demonstrates that the number of Newton iterations is mesh independent.

\begin{table}[ht!]	
\caption{Number of SSSN and GMRES iterations}
	\centering
	\begin{tabular}{r||rrr|rrr|rrr}
	\toprule
$\epsilon=1e^{-8}\color{black},\ \kappa=5$ & \multicolumn{3}{c|}{$g=0$} & \multicolumn{3}{c|}{$g=5$}& \multicolumn{3}{c}{$g=10$}\\ 
	$n_p$/$n_t$/$n_s$ & $It$ & $It_{in}$  &CPU& $It$ & $It_{in}$ & CPU& $It$ & $It_{in}$ &CPU\\
         \midrule
729/ 2560 / 63 & 4 & 70 & 0.11 & 6 & 83 & 0.08 & 4 & 72 & 0.09\\ 
1331/ 5000 / 99 & 5 & 101 & 0.23 & 7 & 124 & 0.53 & 4 & 81 & 0.16\\ 
2197/ 8640 / 143 & 5 & 104 & 0.42 & 6 & 120 & 0.38 & 4 & 75 & 0.25\\ 
3375/ 13720 / 195 & 5 & 104 & 0.89 & 7 & 168 & 1.17 & 4 & 94 & 0.70\\ 
6859/ 29160 / 323 & 5 & 115 & 3.36 & 6 & 136 & 3.58 & 4 & 92 & 2.45\\ 
9261/ 40000 / 399 & 5 & 126 & 5.30 & 7 & 216 & 7.59 & 3 & 68 & 3.20\\ 
12167/ 53240 / 483 & 5 & 133 & 7.38 & 8 & 265 & 12.88 & 4 & 97 & 5.45\\ 
15625/ 69120 / 575 & 5 & 141 & 11.36 & 7 & 255 & 17.30 & 4 & 99 & 8.97\\ 
19683/ 87880 / 675 & 5 & 148 & 18.44 & 8 & 263 & 25.72 & 4 & 102 & 18.69\\ 
\end{tabular}
\label{tab:ex1}
\end{table}
\newpage


\subsection*{Example 2}
Another example deals with the Stokes problem with stick-slip boundary conditions on the mesh of the Cerebral Aneurysm (see~\cite{aneurysm}) from the Vascular Model Repository [www.vascularmodel.com], which is an open-source database of cardiovascular models. This 3D domain $\Omega$ has a boundary $\partial\Omega$ divided into four non-empty disjoint parts such that $\partial \Omega = \overline{\gamma}_D \cup\overline{\gamma}_S\cup\overline{\gamma}_{N_1}\cup\overline{\gamma}_{N_2}$. 

On the set $\gamma_D$ one has a Dirichlet boundary condition where $\bu_D$ is described by a parabolic function which acts as an inlet. Zero Neumann "Do-nothing" condition is set on $\gamma_{N_1}$ and $\gamma_{N_2}$, which will provide an outlet, and impermeability and a stick-slip boundary condition are applied on $\gamma_S$. Kinematic viscosity is set to $\nu=0.9$ and the adhesive coefficient $\kappa=10$. The 2D scheme of the domain $\Omega$ is shown in figure \ref{fig:aneurysm_scheme}. The mesh has the following parameters: the number of nodes $n_p=13873$, the number of tetrahedra $n_t=73833$, and the number of nodes on $\gamma_S$ $n_s=4164$. 

The SSSN method has been tested for various values of the slip bound $g$.

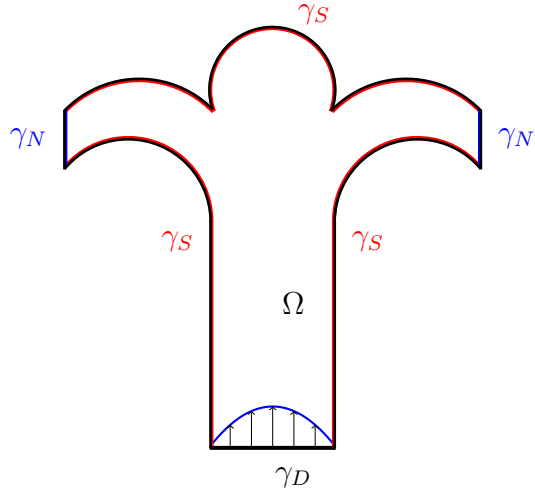
\begin{figure}[ht!]
\centering
\definecolor{ffffff}{rgb}{0.9,0.9,0.9}
\begin{tikzpicture}[x=5.5cm,y=5.5cm]
\draw[scale=1.0, domain=-0.15:0.15,smooth,variable=\x,blue,line width=0.8pt] plot({\x},{-4.2166*(\x)*(\x)+0.35});

 \draw[line width=1.5pt,color=red] (0.14,1.064) arc[start angle=-20,end angle=200,radius=0.15];
 \draw[line width=1pt,color=black] (0.142,1.064) arc[start angle=-20,end angle=198,radius=0.152];

\draw[line width=1.5pt,color=red] (-0.148,0.25) -- (-0.148,0.8);
\draw[line width=1.5pt,color=red] (0.148,0.25) -- (0.148,0.8);

 \draw[line width=1.5pt,color=red] (-0.503,0.928) arc[start angle=140,end angle=0,radius=0.201];
 \draw[line width=1.5pt,color=red] (-0.503,1.063) arc[start angle=136,end angle=44,radius=0.25];

 \draw[line width=1.5pt,color=red] (0.503,0.928) arc[start angle=40,end angle=180,radius=0.201];
 \draw[line width=1.5pt,color=red] (0.503,1.063) arc[start angle=44,end angle=136,radius=0.25];

 \draw[line width=1.5pt,color=blue] (0.501,0.93) -- (0.501,1.068);
 \draw[line width=1.5pt,color=blue] (-0.501,0.93) -- (-0.501,1.068);

 \draw[line width=1pt,color=black] (0.503,0.925) arc[start angle=40,end angle=180,radius=0.2];
 \draw[line width=1pt,color=black] (0.503,1.066) arc[start angle=44,end angle=136,radius=0.251];

 \draw[line width=1pt,color=black] (-0.503,0.925) arc[start angle=140,end angle=0,radius=0.2];
 \draw[line width=1pt,color=black] (-0.503,1.066) arc[start angle=136,end angle=45,radius=0.251];

 \draw[line width=1pt,color=black] (0.5032,0.9234) -- (0.5032,1.0675);
\draw[line width=1pt,color=black] (-0.5032,0.9234) -- (-0.5032,1.0675);

\draw[line width=1pt,color=black] (-0.15,0.25) -- (-0.15,0.8);
\draw[line width=1pt,color=black] (0.15,0.25) -- (0.15,0.8);

 \draw[line width=1.5pt,color=black] (-0.1525,0.25) -- (0.1525,0.25);

\draw[color=red] (0.1,1.3) node {$\gamma_S$};

\draw[color=blue] (-0.59,1.0) node {$\gamma_N$};
\draw[color=blue] (0.59,1.0) node {$\gamma_N$};

\draw[color=red] (-0.23,0.75) node {$\gamma_S$};
\draw[color=red] (0.23,0.75) node {$\gamma_S$};

\draw[color=black] (0.05,0.18) node {$\gamma_D$};
\draw[color=black] (0.05,0.6) node {$\Omega$};

\draw [->][ultra thin] (-0.1027,0.25) -- (-0.1027,0.3056);
\draw [->][ultra thin] (-0.0513,0.25) -- (-0.0513,0.3389);
\draw [->][ultra thin] (0,0.25) -- (0,0.35);
\draw [->][ultra thin] (0.0513,0.25) -- (0.0513,0.3389);
\draw [->][ultra thin] (0.1027,0.25) -- (0.1027,0.3056);

\end{tikzpicture}
  \caption{2D scheme of flow in Cerebral Aneurysm}
  \label{fig:aneurysm_scheme}
\end{figure}
 Figures \ref{fig5:5} show the absolute value of the velocity at the boundary of $\Omega$ for two values of $g=0$ and $g=10$.

\begin{figure}[ht!]
\includegraphics[width=1.0\linewidth]{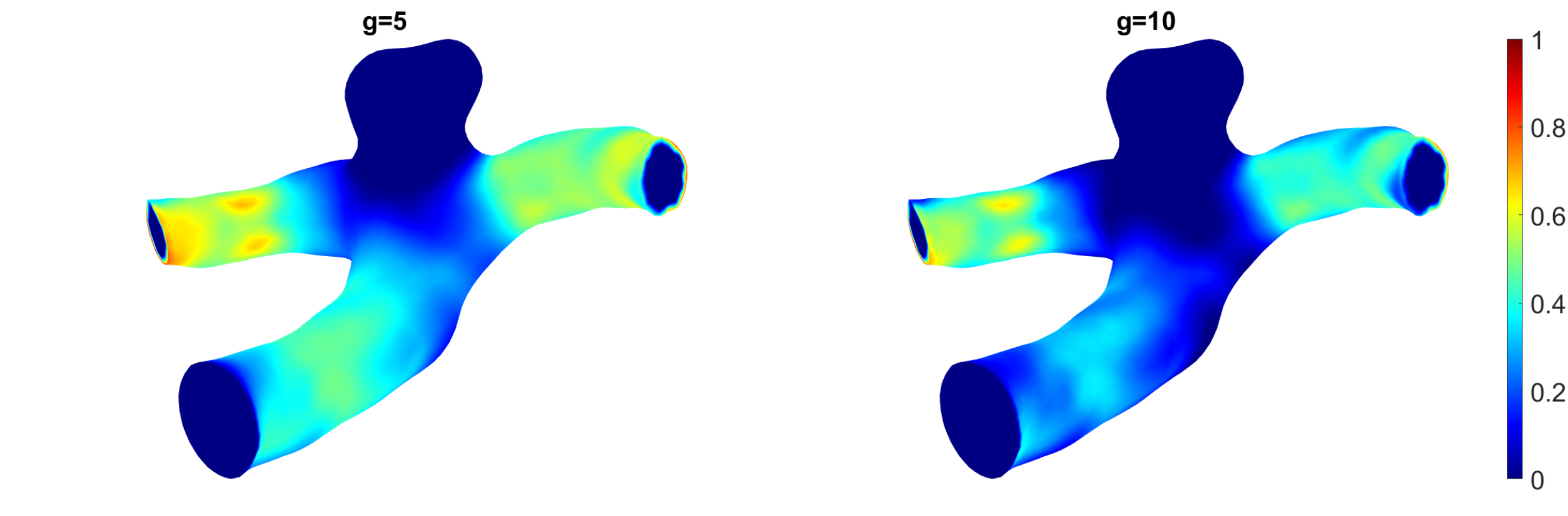}\\
\captionof{figure}{Tangent velocity}
\label{fig5:5}
\end{figure}

The results for all three variants are shown in Table \ref{tab:ex2}. Here, for each SSSN iteration $It$, the residue $Res$, the number of GMRES iterations $It_{in}$, and the number of nodes that are in the slip or stick state are shown. In the~SSSN method, the stopping criterion was set to $1e^{-8}$.

\begin{table}[ht!]
\caption{Number of SSNS and GMRES iterations for $\epsilon=1e^{-8}$}
	\centering
    \scriptsize
	\begin{tabular}{r||rrl|rrr|rrr}
	\toprule
$\kappa=10$ & \multicolumn{3}{c|}{$g=0$} & \multicolumn{3}{c|}{$g=5$}& \multicolumn{3}{c}{$g=10$}\\ 
	$It$ & $Res$ & $It_{in}$ & $Slip/Stick$ &  $Res$ & $It_{in}$ & $Slip/Stick$  &  $Res$ & $It_{in}$ & $Slip/Stick$  \\
         \midrule
 1 & 1.41e+00 & 79 & 43/4121 & 1.42e+00 & 87 & 33/4131 & 1.39e+00 & 69 & 21/4143 \\
2 & 1.37e+00 & 119 & 4164/0 & 1.22e+00 & 130 & 2886/1278 &  9.80e-01 & 154 & 1431/2733 \\
3 & 1.17e-02 & 118 & 4164/0 &  9.26e-01 & 116 & 3351/813 & 8.29e-01 & 131 & 2501/1663 \\
4 & 2.63e-04 & 130 & 4164/0 &  1.56e-01 & 105 & 3429/735 &  4.66e-01 & 121 & 2926/1238 \\
5 & 3.37e-06 & 140 & 4164/0 & 2.28e-02 & 82 & 3442/722 &  1.24e-01 & 119 & 3027/1137 \\
6 & 2.90e-08 & 144 & 4164/0 & 1.36e-03 & 104 & 3448/716 & 2.23e-02 & 120 & 3048/1116 \\
7 & 2.06e-10 & 146 & 4164/0 & 2.46e-05 & 157 & 3448/716 & 1.33e-03 & 93 & 3048/1116 \\
8 &  \text{Converg} &  &  & 1.05e-06 & 170 & 3449/715 &  1.72e-05 & 187 & 3048/1116 \\
9 &  &  &  &  1.68e-07 & 60 & 3449/715 & 2.28e-06 & 182 & 3048/1116 \\
10 & & & & 1.86e-07 & 86 & 3449/715 &  4.04e-08 & 204 & 3048/1116 \\
11 & && &  2.05e-07 & 14 & 3449/715 & 1.12e-09 & 203 & 3048/1116 \\
12 & && & 2.28e-07 & 35 & 3449/715 &  \text{Converg}\\
13 & && & 2.53e-07 & 141 & 3449/715 &  \\
14 & && & 2.74e-07 & 174 & 3449/715 &  \\
15 & && & 1.79e-07 & 230 & 3449/715 & \\
16 & && &  1.02e-07 & 202 & 3449/715 & \\
17 & && &  4.08e-08 & 257 & 3449/715 & \\
18 & && & 1.38e-08 & 264 & 3449/715 & \\
19 & && & 3.66e-09 & 276 & 3449/715 & \\
20 & & & & \text{Converg} &  & & \\
\end{tabular}
\label{tab:ex2}
\end{table}
\newpage

\subsection*{Conclusion}
We proposed an algorithm for the numerical solution of the 3D Stokes problem with Navier-Tresca stick-slip boundary conditions based on the recently developed SCD (Subspace Containing Derivative) semismooth* Newton method (SSSN method). This algorithm can also be used to solve the~Navier-Stokes problem and the unsteady Navier-Stokes problem after a relatively simple modification of the implementation.

The suggested method has been compared by the authors with the semismooth Newton method presented in~\cite{haslinger-kucera-sassi-satek}. We used this method for the same examples presented in the last section of the paper. Based on numerical experiments, we report that the SSSN method yields slightly better convergence rates than the semismooth Newton method (\cite{haslinger-kucera-sassi-satek}). Further, this method does not require to cast the problem in form of an equation.

\subsection*{Acknowledgement}
This research has been conducted with the financial support\linebreak of the European Union under the REFRESH – Research Excellence\linebreak for Region Sustainability and High-tech Industries project number
\linebreak CZ.10.03.01/00/22\_003/0000048 through the Operational Programme Just Transition.

\bibliography{Semismooth_Newton_star}

\end{document}